\definecolor{red}{rgb}{.8,0,0}
\newtheorem{theorem}{Theorem}[section]
\newtheorem{lemma}[theorem]{Lemma}
\newtheorem{proposition}[theorem]{Proposition}
\newtheorem{corollary}[theorem]{Corollary}
\newtheorem{definition}[theorem]{Definition}
\newenvironment{proof}{\noindent{Proof:}}{\hfill\qed\smallskip}
\newcommand{\qed}{\quad\rule{1.5ex}{1.5ex}}
\newcommand{\ol}{\overline}
\newcommand{\wt}{\widehat}
\newcommand{\mbb}{\mathbb}
\newcommand{\mc}{\mathcal}
\newcommand{\vs}{\vskip 25pt}
\newcommand{\ti}{\textit}
\newcommand{\la}{\langle}
\newcommand{\ra}{\rangle}
\newcommand{\ith}{^\textnormal{th}}
\newcommand{\F}{\mathbb{F}}
\title{Association schemes obtained from the action of the general unitary group on isotropic vectors}
\author{Nathaniel Benjamin\footnote{Nate.Benjamin@dordt.edu, Mathematics and Statistics, Dordt University, Sioux Center, IA 51250, U. S. A.} \ and Sung Y. Song\footnote{sysong@iastate.edu, Department of Mathematics, Iowa State University, Ames, IA 50011, U. S. A.}}
\begin{document} %
\pagenumbering{arabic} \setcounter{page}{1}

\maketitle


\begin{abstract}  An infinite family of association schemes obtained from the general unitary groups acting transitively on the sets of isotropic vectors in the finite unitary spaces are investigated. We compute the parameters and determine the character tables for all nontrivial commutative association schemes belonging to this family. 
\footnote{This work contains a part of the first author's Ph.D. dissertation \cite{Ben}.}
\end{abstract}
\bigskip

\flushleft Keywords: Schurian association scheme, character table.
\bigskip

MSC-2020 Classification: 05E30 (primary); 20G15, 11E39 (secondary).
\vs


\section{Introduction and preliminaries}

An association scheme of class $d$ is a pair $\mc{X}=\left (X, \{R_i\}_{i\in [d]}\right )$ of a finite set $X$ and a set of non-empty $d+1$ relations $\{R_0, R_1, \dots, R_d\}$  of $X$ such that 
\begin{enumerate}
\item[(1)] {$R_0=\{(x,x): x\in X\}$} is the identity relation;
\item [(2)] {$R_0\cup R_1\cup \cdots \cup R_d=X\times X$} and $R_i\cap R_j= \emptyset$ for $i\neq j$ in $[d]:=\{0, 1, 2, \dots, d\}$; 
\item[(3)] for each $i\in [d]$, {$R_i^{'}=R_{i'}$} for some $i'\in [d]$ where $R_i^{'}:=\{(x,y): (y,x)\in R_i\}$; 
\item[(4)] for each triple $h, i, j\in [d]$, there exists a non-negative integer $p_{ij}^h$ such that for all $(x,y)\in R_h$, the number
$| \{z\in X: (x,z)\in R_i, (z,y)\in R_j\}|$ is equal to $p^h_{ij}$.
\end{enumerate}

The constants $p_{ij}^h$ are called the \ti{intersection numbers} (parameters) of the scheme $\mc{X}$. If an association scheme $\mc{X}=\left (X, \{R_i\}_{i\in [d]}\right )$ satisfies 
that, for all $h, i,j\in [d]$, {$p_{ij}^h=p_{ji}^h$}, 
then it is said to be \ti{commutative}. If for all $i\in [d]$, $R_i'=R_i$, then it is called \ti{symmetric}.

A major source of association schemes is the set of transitive permutation groups (cf. \cite{Ban, BI, BHS, BHSW Zie}). If a finite group $G$ acts on a finite set $\Phi$ transitively, the set of the orbitals (also called 2-orbits) of $G$; that is, the set of the orbits of the action of $G$ on $\Phi\times \Phi$, forms an association scheme. Such an association scheme $\mc{X}=\left (\Phi, \{R_i\}_{i\in [d]}\right )$ where $R_0, R_1, \dots, R_d$ are the orbitals of the permutation group $G$ on $\Phi$ (of rank $d+1$) is called a \emph{Schurian association scheme} (of class $d$) and denoted by $\mc{X}(G, \Phi)$. \

In this paper, we study the Schurian association schemes coming from the transitive action of the finite general unitary groups on the sets of isotropic vectors of the $n$-dimensional unitary space equipped with a non-degenerate Hermitian inner product over the finite field $\mbb{F}_{q^2}$ of order $q^2$ for all $n\ge 2$ and prime powers $q$. Our aim is to provide a complete description of  these Schurian association schemes in terms of their character tables. These association schemes help fill the void in the existing collection of the Schurian association schemes coming from the classical groups acting on various sets of vectors or subspaces of the corresponding geometries. 

For known interesting examples of Schurian association schemes, we refer the readers to Brouwer-Cohen-Neumaier \cite{BCN} for the $P$-polynomial association schemes (distance-regular graphs) corresponding to the dual polar spaces, Brouwer-van Maldeghem \cite{Bro} for known schemes of class 2 (strongly regular graphs),  and Hanaki \cite{Ha} for all Schurian schemes of small orders (orders up to 40 or so). For the list of infinite families of Schurian schemes whose character tables are known, see \cite{ST} and the references there. 

Given a $d$-class commutative association scheme $\mc{X}=(X, \{R_i\}_{i\in [d]})$ of order $n$ (i.e., $|X|=n$),
let $A_i$ denote the $i$th adjacency matrix representing $R_i$; that is, $\{0, 1\}$-matrix whose $(x,y)$-entry is defined by \[\left (A_i\right )_{xy}=\left \{ \begin{array}{ll} 1 & \mbox{if } (x,y)\in R_i\\
0 & \mbox{otherwise.}\\ \end{array}\right . \]
By the definition of commutative association scheme, these matrices satisfy that
\begin{enumerate}
\item[(1)] $A_0=I$, the identity matrix; 
\item[(2)] $A_0+A_1+\cdots +A_d=J$, where $J$ is the all-ones matrix;
\item[(3)] for each $i\in [d]$, $A_i^{'}=A_{i'}$ for some $i'\in [d]$ where $A_i^{'}$ denotes the transpose of $A_i$;
\item[(4)] for any $h, i, j\in [d]$, there exists a constant $p_{ij}^h$ such that
\[A_iA_j=\sum\limits_{h=0}^d p_{ij}^h A_h;\]
\item[(5)] $A_iA_j=A_jA_i$ for every $i, j\in [d]$.
\end{enumerate}

In the full matrix algebra consisting of all $n\times n$ matrices over the field of complex numbers, these adjacency matrices of $\mc{X}$ generate the $(d+1)$-dimensional commutative algebra $\mc{A}:=\left < A_0, A_1, \dots, A_d \right >$ known as the \ti{Bose-Mesner algebra} of $\mc{X}$. 
The algebra $\mc{A}$ being a semi-simple algebra admits central primitive idempotents.
Let $E_0=\frac1n J, E_1, \dots, E_d$ denote the primitive idempotents in $\mc{A}$. Then there exist complex numbers $p_j(i)$ and $q_i(j)$ for $i,j\in [d]$ such that
\[A_j=\sum\limits_{i=0}^d p_j(i)E_i, \quad E_i=\frac{1}{n} \sum\limits_{j=0}^h q_i(j) A_j.\]
The $(d+1)\times (d+1)$ base-change matrices $P$ and $Q$, whose $(i,j)$-entries are $p_j(i)$ and $q_j(i)$, respectively, are called the \ti{1st} and \ti{2nd eigenmatrix} of $\mc{X}$, respectively. The $d+1$ entries $p_j(0), p_j(1), \dots, p_j(d)$, the entries of the column indexed with $j$ of $P$, are the roots of the minimal polynomial of $A_j$ for each $j\in[d]$. The first eigenmatrix $P$ is also called the \emph{character table} of the association scheme. 
We denote the multiplicities of $\mc{X}$ by $m_0, m_1, \dots, m_d$, which are the ranks of  idempotents $E_0, E_1, \dots, E_d$, respectively. Note that $m_i$ is the trace of $E_i$ as the eigenvalues of $E_i$ are 1 and 0. The $i\ith$ \ti{valency}, i.e., the number of elements of $X$ that are in the $i\ith$ associates with $x$ for any fixed $x\in X$, is denoted by $k_i$. Note that $k_i=p_{ii'}^0$. Then we have the following formulae for the parameters and eigenvalues of $\mc{X}$ (cf. \cite[Ch 2]{BI}):
\begin{proposition} \label{formulae}
\[
\begin{array}{llll}
&m_i = n\big (\sum\limits_{j=0}^d \frac{|p_j(i)|^2}{k_j}\big )^{-1} &\qquad &\text{for}\  i\in [d] \\
&p_{ij}^h = \frac1{n\cdot k_h} \sum\limits_{l=0}^d p_i(l) p_j(l) \ol{p_h(l)} m_{l}
&\qquad &\text{for}\ h,i,j\in [d].
\end{array}
\]
where $\ol{a}$ denotes the complex
conjugate of $a$.
\end{proposition}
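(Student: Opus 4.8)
The plan is to play the two natural bases of the Bose--Mesner algebra $\mc{A}$ off against each other --- the adjacency matrices $A_0,\dots,A_d$ and the primitive idempotents $E_0,\dots,E_d$ --- using the Hermitian form $\langle M,N\rangle:=\frac1n\,\mathrm{tr}(MN^{*})$ on the algebra of $n\times n$ complex matrices, where $N^{*}$ is the conjugate transpose of $N$; this form is linear in $M$, conjugate-linear in $N$, and satisfies $\langle M,N\rangle=\overline{\langle N,M\rangle}$.

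First I would establish that both bases are orthogonal with respect to $\langle\cdot,\cdot\rangle$. The $(x,x)$-entry of $A_iA_j^{*}$ is $\sum_{z}(A_i)_{xz}(A_j)_{xz}=|\{z\in X:(x,z)\in R_i,\ (x,z)\in R_j\}|$, whence $\langle A_i,A_j\rangle=k_i\,\delta_{ij}$. For the idempotents, note that $\mc{A}$ is closed under conjugate transpose (since $A_i^{*}=A_{i'}$) and commutative, hence $*$-isomorphic to $\mathbb{C}^{d+1}$ with entrywise conjugation, so every $E_i$ is Hermitian; concretely, $E_i^{*}$ is again a primitive idempotent, say $E_{\sigma(i)}$, with $E_iE_{\sigma(i)}=\delta_{i,\sigma(i)}E_i$, and $\mathrm{tr}(E_iE_i^{*})=\|E_i\|^2>0$ forces $\sigma(i)=i$. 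Combined with $\mathrm{tr}(E_i)=m_i$, this gives $\langle E_i,E_j\rangle=\frac{m_i}{n}\,\delta_{ij}$.

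Next I would relate the two eigenmatrices. Pairing the expansion $A_j=\sum_i p_j(i)E_i$ against $E_i$ gives $\langle A_j,E_i\rangle=\frac{m_i}{n}p_j(i)$, while pairing $E_i=\frac1n\sum_j q_i(j)A_j$ against $A_j$ gives $\langle E_i,A_j\rangle=\frac{k_j}{n}q_i(j)$; since these are complex conjugates of each other and $m_i,k_j$ are real,
\[
q_i(j)=\frac{m_i}{k_j}\,\overline{p_j(i)} \qquad (i,j\in[d]).
\]
For the multiplicities, substitute $A_j=\sum_i p_j(i)E_i$ into $E_l=\frac1n\sum_j q_l(j)A_j$ and compare coefficients in the basis $\{E_i\}$ to obtain $\sum_{j=0}^d q_l(j)p_j(i)=n\,\delta_{li}$; taking $l=i$ and inserting the displayed relation yields $m_i\sum_{j=0}^d |p_j(i)|^2/k_j=n$, the first formula. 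For the intersection numbers, $E_lE_m=\delta_{lm}E_l$ gives
\[
A_iA_j=\sum_{l=0}^d p_i(l)p_j(l)\,E_l=\frac1n\sum_{h=0}^d\Big(\sum_{l=0}^d p_i(l)p_j(l)\,q_l(h)\Big)A_h,
\]
and comparison with $A_iA_j=\sum_h p_{ij}^h A_h$, using linear independence of the $A_h$ and then $q_l(h)=\frac{m_l}{k_h}\overline{p_h(l)}$, produces exactly the stated expression for $p_{ij}^h$.

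The one step that carries genuine content is the orthogonality $\langle E_i,E_j\rangle=\frac{m_i}{n}\delta_{ij}$ --- equivalently, the Hermiticity of the primitive idempotents; everything downstream is linear algebra with the change-of-basis matrices $P$ and $Q$. (This is the classical derivation; cf. \cite[Ch.~2]{BI}.)
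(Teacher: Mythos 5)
Your proof is correct. The paper states this proposition without proof, citing \cite[Ch.~2]{BI}, and your derivation --- orthogonality of the bases $\{A_i\}$ and $\{E_i\}$ under the normalized trace form, Hermiticity of the primitive idempotents, the relation $q_i(j)=\frac{m_i}{k_j}\ol{p_j(i)}$, and the identity $PQ=nI$ --- is precisely the standard argument found in that reference, so there is nothing to compare against or correct.
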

\begin{proposition} \label{eigen} The character table $P = \big[ p_j(i)\big ]$ of $\mc{X} = (X,\{R_i\}_{i\in [d]})$
satisfies the orthogonality relations:
\[\begin{array}{llll}
&\sum\limits_{j=0}^d \frac1{k_j} p_j(i_1)\cdot \ol{p_j(i_2)} = \delta_{i_1i_2} \frac{n}{m_{i_1}},  &\qquad &i_1,i_2\in [d] \\
&\sum\limits_{i=0}^d m_i p_{j_1}(i) \ol{p_{j_2}(i)} = \delta_{j_1j_2}\cdot n\cdot k_{j_1}, &\qquad &j_1,j_2\in [d]
\end{array}\]
where $\delta_{ij} = \left \{\begin{array}{ll} 1 & \mbox{if } i=j\\ 0 &  \mbox{if } i\neq j\end{array}\right .$.
\end{proposition}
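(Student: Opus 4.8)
The plan is to derive both identities of Proposition~\ref{eigen} from two Gram-type trace computations inside the Bose--Mesner algebra $\mc{A}$: the second (``column'') relation comes first, essentially from a combinatorial count, and the first (``row'') relation then follows by inverting the matrix $P$. Throughout I use the facts already available from the setup: the $A_i$ are $\{0,1\}$-matrices whose relations partition $X\times X$, $A_i^{T}=A_{i'}$, the primitive idempotents satisfy $E_iE_j=\delta_{ij}E_i$, and $\mathrm{tr}(E_i)=m_i$.

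The first step is the combinatorial input. Since the $R_i$ partition $X\times X$, we have $|R_{j_1}\cap R_{j_2}|=\delta_{j_1j_2}|R_{j_1}|=\delta_{j_1j_2}\,n\,k_{j_1}$, and therefore, as $A_{j_2}$ is a real matrix so that $A_{j_2}^{*}=A_{j_2}^{T}=A_{j_2'}$,
\[
\mathrm{tr}\!\left(A_{j_1}A_{j_2}^{*}\right)=\sum_{x,y\in X}(A_{j_1})_{xy}\,\ol{(A_{j_2})_{xy}}=|R_{j_1}\cap R_{j_2}|=\delta_{j_1j_2}\,n\,k_{j_1}.
\]
The second step is to check that each primitive idempotent is Hermitian, $E_i^{*}=E_i$. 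Because $A_i^{*}=A_{i'}\in\mc{A}$, the algebra $\mc{A}$ is closed under conjugate-transposition, and since $\mc{A}$ is commutative this operation is an algebra automorphism; hence it permutes the primitive idempotents, say $E_i^{*}=E_{\pi(i)}$. But $\mathrm{tr}(E_iE_i^{*})=\sum_{x,y}|(E_i)_{xy}|^{2}>0$ while $\mathrm{tr}(E_iE_{\pi(i)})=\delta_{i,\pi(i)}m_i$, forcing $\pi(i)=i$.

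With these in hand, expand the trace above through the eigenvalue expansions $A_{j_1}=\sum_i p_{j_1}(i)E_i$ and $A_{j_2}^{*}=\sum_i \ol{p_{j_2}(i)}E_i$ (using Step~2), together with $E_iE_l=\delta_{il}E_i$ and $\mathrm{tr}(E_i)=m_i$, to get $\mathrm{tr}(A_{j_1}A_{j_2}^{*})=\sum_i m_i\,p_{j_1}(i)\,\ol{p_{j_2}(i)}$; comparing with the combinatorial value is exactly the second orthogonality relation. For the first, read the second relation as the matrix identity $P^{*}D_mP=nD_k$, where $D_k=\mathrm{diag}(k_0,\dots,k_d)$, $D_m=\mathrm{diag}(m_0,\dots,m_d)$, and $P_{ij}=p_j(i)$ as in the text. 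Since every $k_j\ge 1$ and every $m_i\ge 1$, all these matrices, and hence $P$, are invertible; solving gives $P^{-1}=\frac1n D_k^{-1}P^{*}D_m$, and substituting this into $PP^{-1}=I$ yields $PD_k^{-1}P^{*}=nD_m^{-1}$, whose $(i_1,i_2)$-entry is precisely $\sum_j \frac1{k_j}\,p_j(i_1)\,\ol{p_j(i_2)}=\delta_{i_1i_2}\frac{n}{m_{i_1}}$.

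The only step that needs genuine care, rather than bookkeeping, is Step~2: it rests on the uniqueness of the decomposition of $I$ into primitive idempotents of the commutative semisimple algebra $\mc{A}$ (so that conjugate-transposition can only permute the $E_i$) and on the positive-definiteness of the form $X\mapsto\mathrm{tr}(XX^{*})$ (which then removes the permutation). The rest is routine; the points to watch are keeping the complex conjugation on the correct factor in the trace expansions and respecting the convention $P_{ij}=p_j(i)$ when passing to matrix form.
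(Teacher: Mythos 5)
Your proof is correct, and since the paper states this proposition without proof (citing Bannai--Ito, Ch.~2), your argument is exactly the standard one being invoked there: the trace computation $\mathrm{tr}(A_{j_1}A_{j_2}^{*})=\delta_{j_1j_2}nk_{j_1}$ combined with the Hermiticity of the $E_i$ gives the column relation, and matrix inversion gives the row relation. The only cosmetic point is that conjugate-transposition is a conjugate-linear ring automorphism of the commutative algebra $\mc{A}$ rather than a $\mathbb{C}$-algebra automorphism, but this does not affect your (correct) conclusion that it permutes, and hence fixes, the primitive idempotents.
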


There is another matrix algebra associated with $\mc{X}$. Let $B_i$, $i\in [d]$, be the $i$th \ti{intersection matrix} defined by
$\left (B_i\right )_{jh}=p_{ij}^h.$ Then \[B_iB_j=\sum\limits_{h=0}^d p_{ij}^h B_h.\] It follows that the algebra $\mc{B}:=\left <B_0, B_1, \dots, B_d\right >$ over $\mbb{C}$ is isomorphic to $\mc{A}:=\left < A_0, A_1, \dots, A_d\right >$.  In particular, the minimal polynomial of $B_i$ coincides with that of $A_i$ for each $i\in[d]$, and thus, 
the eigenvalues of $B_i$s are the entries of the character table of the scheme $\mathcal{X}$. 
Therefore, it is possible that the character table can be determined from the eigenvalues of $B_i$ with or without additional information. We will refer to this approach of constructing the character table of $\mc{X}$ as an elementary and `direct approach'. 

We note that there is an `indirect approach' that has been used as well. When $G$ acts transitively on $\Phi$, there is a natural one-to-one correspondence between $\Phi$ and the set $G_x\backslash G$ of the cosets of a point stabilizer $G_x$ for $x\in \Phi$. Thus, the permutation representation of the action $G$ on $\Phi$ is identical to that of the action $G$ on the set $H\backslash G$ of cosets of $H$, where $H=G_x$ for $x\in\Phi$. The permutation character of the representation is the same as the induced character $1^G_{H}$ of $H$ in $G$. We also note that there is a one-to-one correspondence between any two of the following three:
\begin{quote}
(i) the set of orbitals of the permutation group $(G, \Phi)$, \\ (ii) the set of suborbits of $(G,\Phi)$ and\\ (iii) the set of double cosets $Hg_iH$ of $H$ in $G$, $g_i\in G$.    
\end{quote}
It follows that there are links between the following three in terms of associated algebra:
\begin{quote}
(i) the Bose-Mesner algebra of the Schurian scheme $\mc{X}(G, \Phi)$;\\
(ii) the centralizer algebra (Hecke algebra) of $(G, \Phi)$;\\
(iii) the double coset algebra, the subalgebra spanned by the set of simple quantities $\frac{1}{|H|}\sum\limits_{g\in Hg_iH}g$ in group algebra.
\end{quote}

Although we take the direct approach to construct the character table of a Schurian association scheme $\mc{X}(G, \Phi)$ with $G=GU(n,q)$ and $\Phi=\Phi(n, q)$ for suitable $(n, q)$, there is an alternative way to find the entries of the character table for some cases. Namely, its character table may be determined by using the relationship between Bose-Mesner algebra and the associated algebra of the permutation group $(G,\Phi)$ as mentioned above. For instance, 
suppose all orbitals are self-paired (thus, the scheme is symmetric) and the induced character $1^G_H$ is multiplicity-free, so that
\[1^G_H = \chi_0+\chi_1 + \cdots +\chi_d,\]
where $\chi_0=1_G$ and $\chi_1, \chi_2, \dots, \chi_d$ are distinct irreducible characters of $G$. Then the entries $p_j(i)$ of the character table $P=\left [p_j(i) \right ]$ of the scheme $\mc{X}(G, H\backslash G)$ can be explicitly expressed as:
\[p_j(i)\ \ = \ \ \frac{1}{|H|}\sum\limits_{a\in HGH}\chi_i(a) \ \ = \ \ \frac{1}{|H|}\sum\limits_{l} |Hg_jH\cap C_l|\cdot \chi_i(c_l)\]
where $C_0, C_1, \dots, C_l, \dots, C_d$ is the conjugacy classes of $G$ and $c_l$ is the class representative of $C_l$. However, finding the entries $p_j(i)$ of the character table using the above equation is sometimes involved if it is not impossible (cf. \cite{ST} and references in it). 


The highlights of the remaining sections may be described as the following.

\begin{itemize}
\item[Sec 2.] The fact that $GU(n,q)$ is acting transitively on $\Phi(n,q)$ for $n, q$, with $n\ge 2, q\ge 2$ is verified and then,  the orbitals of the action are classified.  
 \item[Sec 3.] 
 The intersection numbers of the association schemes $\mathcal{X}(GU(n,q), \Phi(n, q))$ for all prime powers $q$ and all valid values of $n$ ($n\geq 2$) are calculated. 
 \item[Sec 4.] The fact that association scheme $\mathcal{X}(GU(n,q), \Phi(n, q))$ is commutative if and only if $q=2$ is proved in Theorem \ref{veccom}. 
 \item[Sec 5.] The character table of the commutative scheme $\mathcal{X}(GU(n,2), \Phi(n, 2))$ (for arbitrary $n$) is constructed. 
\end{itemize}

\section{The orbitals of permutation group $GU(n,q)$ on $\Phi(n,q)$} \label{vecorbitsec}

In this section, we shall see that for each prime power $q$ and integer $n\ge 2$, the permutation action of finite unitary group $GU(n,q)$ on the set $\Phi(n,q)$ of isotropic vectors in the finite unitary space $\mbb{F}_{q^2}^n$ is transitive, and thus, its orbitals form an association scheme. Our aim is to give a complete description of orbitals that form the relation set of the association scheme. \footnote{For more information on the facts on finite fields and finite classical geometries that are used in this discussion, we refer the readers to the books by Wan \cite{Wan}, Grove \cite{Gro} and Taylor \cite{Tay}.}

Given a prime power $q$, let $V=\mathbb{F}_{q^2}^n$ be the $n$-dimensional unitary space over $\mathbb{F}_{q^2}$, that is, $V$ is the vector space equipped with a non-degenerate Hermitian inner product on which $GU(n,q)$ is acting.  Recall that the field $\mbb{F}_{q^2}$ has an involution $a\mapsto \bar{a}=a^q$ whose fixed field is $\mbb{F}_q$. We will repeatedly use the fact that: 
(i) For any $\lambda\in \mbb{F}_q^*$, the equation $x\bar{x}=\lambda$ has exactly $q+1$ solutions in $\mathbb{F}_{q^2}^*$.
(ii) For any $\lambda\in \mbb{F}_q$, the equation $x+\bar{x}=\lambda$ has exactly $q$ solutions in $\mathbb{F}_{q^2}$ (cf. \cite{Wan}).

Given row vectors $x=(x_1, x_2, \dots, x_n)$ and  $y=(y_1, y_2, \dots, y_n)$ in $V$, denote the Hermitian inner product of $x$ and $y$ by
\[\la x, y\ra = xy^* = x_1y_1^q+ x_2y_2^q+\cdots + x_n y_n^q.\]
Recall that the set of isotropic vectors in $V$ is $$\Phi(n,q):=\{x\in V-\{0\}: \la x, x\ra = 0\}$$ and the number of isotropic vectors in $V$ is:
\[|\Phi(n,q)|= (q^n-(-1)^n)(q^{n-1}-(-1)^{n-1})\]

In what follows, $GU(n,q)$ and $\Phi(n,q)$ will be often denoted by $G$ and $\Phi$, respectively, for short, once $n$ and $q$ are fixed.
The group action here is described for $x\in \Phi$ and $U\in G$ by $xU$ using typical matrix multiplication. 
We extend the group action of $G$ on $\Phi$ to $G$ acting on $\Phi\times \Phi$ in the obvious way described by $(x,y)U = (xU, yU)$.  

In order to verify that the action of $G$ on $\Phi$ is indeed transitive, we need to recall the following well-known results. Note that a \textit{hyperbolic pair} in $V$ is a pair of vectors $(u,v)$ such that $u$ and $v$ are isotropic, and the Hermitian inner product $\langle u,v\rangle = 1$. The subspace spanned by $\{u,v\}$ is called a \textit{hyperbolic line}.

\begin{proposition}\label{exis}
 Suppose that $L$ is a two-dimensional subspace of $V$ which contains an isotropic vector $u$ with respect to a non-degenerate Hermitian inner product. Then there exists a vector $v$, such that $L = {Span}\{u,v\}$, and $(u,v)$ is a hyperbolic pair.
\end{proposition}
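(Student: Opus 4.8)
The plan is to produce $v$ as a small correction $v = w + \lambda u$ of an arbitrary vector $w$ that completes $u$ to a basis of $L$, choosing the scalar $\lambda\in\mbb{F}_{q^2}$ so that $v$ becomes isotropic while $\langle u,v\rangle$ remains equal to $1$.

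First I would fix any $w\in L$ with $w\notin\mathrm{Span}\{u\}$, so that $L=\mathrm{Span}\{u,w\}$. The one substantive point is that $\langle u,w\rangle\neq0$: since the Hermitian form does not vanish identically on $L$ — which is the situation in which the proposition is applied, namely $L=\mathrm{Span}\{x,y\}$ with $x,y$ isotropic and $\langle x,y\rangle\neq0$, and more generally is the requirement that the restriction of the form to $L$ be non-degenerate — the isotropic vector $u$ cannot be orthogonal to every vector of $L$, hence $\langle u,w\rangle\neq0$. Rescaling $w$ by a suitable nonzero scalar of $\mbb{F}_{q^2}$, I may assume $\langle u,w\rangle=1$, and then also $\langle w,u\rangle=\overline{\langle u,w\rangle}=1$.

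Next, for $\lambda\in\mbb{F}_{q^2}$ set $v=w+\lambda u$. Expanding by sesquilinearity and using $\langle u,u\rangle=0$ gives
\[
\langle v,v\rangle \;=\; \langle w,w\rangle + \lambda\,\langle u,w\rangle + \overline{\lambda}\,\langle w,u\rangle + \lambda\overline{\lambda}\,\langle u,u\rangle \;=\; \langle w,w\rangle + \lambda + \overline{\lambda}.
\]
Since $\langle w,w\rangle$ is fixed by $a\mapsto\bar a$ it lies in $\mbb{F}_q$, so by the recalled fact that $x+\bar x=c$ is solvable in $\mbb{F}_{q^2}$ for every $c\in\mbb{F}_q$ there exists $\lambda\in\mbb{F}_{q^2}$ with $\lambda+\overline{\lambda}=-\langle w,w\rangle$. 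For such $\lambda$ the vector $v=w+\lambda u$ is isotropic, while $\langle u,v\rangle=\langle u,w\rangle+\overline{\lambda}\,\langle u,u\rangle=1$. Finally $v\notin\mathrm{Span}\{u\}$ because $w\notin\mathrm{Span}\{u\}$, so $\{u,v\}$ is a basis of $L$; thus $(u,v)$ is a hyperbolic pair and $L=\mathrm{Span}\{u,v\}$, as required.

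I do not expect a genuine obstacle here: the computation is routine, and the only step that needs care is the claim $\langle u,w\rangle\neq0$, i.e.\ that $u$ does not lie in the radical of the form restricted to $L$. In the intended applications that is automatic, $L$ being spanned by two non-orthogonal isotropic vectors.
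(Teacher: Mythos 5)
The paper offers no proof of Proposition~\ref{exis} at all: it is recalled as a well-known fact with a pointer to Wan, Grove and Taylor, so there is no in-paper argument to compare yours against. Your construction is the standard textbook one --- complete $u$ to a basis $\{u,w\}$ of $L$, normalize $\langle u,w\rangle=1$, and correct by $v=w+\lambda u$ with $\lambda+\bar\lambda=-\langle w,w\rangle$, solvable by fact (ii) recalled at the start of Section~2 --- and the computations of $\langle v,v\rangle$ and $\langle u,v\rangle$ are correct for the paper's convention (linear in the first slot, conjugate-linear in the second).

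The one step that does not hold up as written is your justification of $\langle u,w\rangle\neq 0$. You derive it from ``the Hermitian form does not vanish identically on $L$,'' but that is strictly weaker than what is needed: take $L=\mathrm{Span}\{u,z\}$ with $z$ anisotropic and $\langle u,z\rangle=0$ (for instance $u=(1,a,0,\dots,0)$ with $a\bar a=-1$ and $z$ a later standard basis vector); the form is nonzero on $L$, yet $u$ lies in the radical of $L$ and no $v\in L$ satisfies $\langle u,v\rangle=1$. What you actually need --- and what you correctly name as the right reading of the hypothesis --- is that the restriction of the form to $L$ be non-degenerate, equivalently $L\not\subseteq u^{\perp}$; under that hypothesis every $w$ completing $u$ to a basis of $L$ does satisfy $\langle u,w\rangle\neq 0$, since $\langle u,au+bw\rangle=\bar b\,\langle u,w\rangle$. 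Some such hypothesis is genuinely unavoidable: for $n\ge 4$ the paper itself exploits totally isotropic planes (Lemma~\ref{ntwo}), on which the conclusion fails outright. Note also that where Proposition~\ref{exis} is actually invoked (Theorem~\ref{vectrans}, Lemma~\ref{rnote}) no $L$ is prescribed in advance: one first picks $w\in V$ with $\langle u,w\rangle\neq0$, available by non-degeneracy of $V$, and runs your argument inside $L=\mathrm{Span}\{u,w\}$; the situation you cite, two isotropic vectors $x,y$ with $\langle x,y\rangle\neq0$, is handled in Lemma~\ref{rtran} by Witt's theorem rather than by this proposition. So: right construction, but replace the ``does not vanish identically'' inference by the non-degeneracy (or $L\not\subseteq u^{\perp}$) assumption, stated explicitly.
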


It is known that $V$ can be decomposed into 
$$V = L_1 \bot L_2 \bot \cdots \bot L_m \bot W$$
where each $L_i$ is a hyperbolic line, with $m$ as the Witt index, and such that $W$ does not contain any isotropic vectors. For our case, we can ignore $W$, since we are concerned with isotropic vectors only, though in general for finite fields, dim$(W)$ = 0 or 1 because of the previous proposition, 
which ensures that we can find more isotropic vectors while the dimension of remaining subspace is at least 2. (cf. \cite[pp.116-117]{Tay}).

\begin{proposition}\label{decomp}
For a vector space $V$ that is decomposed as 
$V = L_1 \bot L_2 \bot \cdots \bot L_m \bot W,$
let $(u_i,v_i)$ be hyperbolic pairs that form a basis for each corresponding $L_i$, and let $w$ be a basis for $W$ with $\langle w, w\rangle = 1$. Then $G$ acts regularly on the basis vectors $u_1, \dots u_m, v_1,\dots ,v_m, w$. That is, the action of $G$ on $\Phi$ is transitive and semiregular.
\end{proposition}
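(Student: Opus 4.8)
The plan is to deduce the proposition from Witt's theorem, but in an explicit, self-contained form: I will show that $G=GU(n,q)$ acts \emph{simply transitively} (i.e.\ transitively and freely, which is what ``regular''/``semiregular'' should be read to mean here) on the set $\mathcal{S}$ of all ordered bases $\mathcal{B}=(u_1,\dots,u_m,v_1,\dots,v_m,w)$ of $V$ — the trailing $w$ present exactly when $n$ is odd — in which each $(u_i,v_i)$ is a hyperbolic pair, the hyperbolic lines $L_i=\mathrm{Span}\{u_i,v_i\}$ are pairwise orthogonal, $w\perp L_i$ for all $i$, and $\langle w,w\rangle=1$. Transitivity of $G$ on $\Phi=\Phi(n,q)$ is then immediate, since (as Step~1 below shows) every isotropic vector occurs as the initial vector $u_1$ of some $\mathcal{B}\in\mathcal{S}$.

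\emph{Step 1.} Every $x\in\Phi$ extends to some $\mathcal{B}\in\mathcal{S}$ with $u_1=x$. Apply Proposition~\ref{exis} to get $v_1$ with $(x,v_1)$ a hyperbolic pair; put $L_1=\mathrm{Span}\{x,v_1\}$, so that $V=L_1\perp L_1^{\perp}$ with $L_1^{\perp}$ non-degenerate of dimension $n-2$. A non-degenerate Hermitian form over $\mathbb{F}_{q^2}$ in dimension at least $2$ always has positive Witt index, so $L_1^{\perp}$ contains an isotropic vector whenever $\dim L_1^{\perp}\ge 2$; applying Proposition~\ref{exis} inside $L_1^{\perp}$ and iterating, I peel off mutually orthogonal hyperbolic lines $L_2,L_3,\dots,L_m$ until the remaining orthogonal complement $W$ has dimension $0$ (if $n$ even) or $1$ (if $n$ odd). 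In the odd case $W=\langle w'\rangle$ with $\langle w',w'\rangle=\mu\in\mathbb{F}_q^{*}$; by fact~(i) there is $c\in\mathbb{F}_{q^2}^{*}$ with $c\bar c=\mu^{-1}$, and replacing $w'$ by $w=cw'$ gives $\langle w,w\rangle=1$. The resulting basis lies in $\mathcal{S}$ and begins with $x$.

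\emph{Step 2.} $G$ is transitive on $\mathcal{S}$ and acts freely there. Let $\mathcal{B}=(u_1,\dots,w)$ and $\mathcal{B}'=(u_1',\dots,w')$ be in $\mathcal{S}$, and let $g\in GL(V)$ be the unique linear map carrying each vector of $\mathcal{B}$ to the corresponding vector of $\mathcal{B}'$. By the defining conditions of $\mathcal{S}$, the Gram matrices of $\mathcal{B}$ and of $\mathcal{B}'$ with respect to $\langle\,,\,\rangle$ coincide — both equal the standard block-diagonal matrix with $m$ hyperbolic $2\times 2$ blocks and, when $n$ is odd, a final entry $1$ — hence $\langle xg,yg\rangle=\langle x,y\rangle$ for all $x,y\in V$, i.e.\ $g\in G$. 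Thus $G$ is transitive on $\mathcal{S}$, and since a linear map is determined by its action on a basis, $g$ is the \emph{only} element of $G$ taking $\mathcal{B}$ to $\mathcal{B}'$; the action of $G$ on $\mathcal{S}$ is therefore regular. Finally, for $x,y\in\Phi$ choose, via Step~1, bases $\mathcal{B}_x,\mathcal{B}_y\in\mathcal{S}$ with initial vectors $x$ and $y$, and let $g\in G$ satisfy $\mathcal{B}_xg=\mathcal{B}_y$; then $xg=y$, so $G$ acts transitively on $\Phi$.

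\emph{What needs care.} The inner-product bookkeeping in Step~1 (essentially the computation in the proof of Proposition~\ref{exis}) and the scaling of $w$ via fact~(i) are routine. The one ingredient that genuinely uses the arithmetic of Hermitian spaces over finite fields is the assertion that the peeling-off in Step~1 terminates with an anisotropic kernel of dimension at most $1$ — equivalently, that every non-degenerate Hermitian form over $\mathbb{F}_{q^2}$ of dimension $\ge 2$ is isotropic; this is the standard fact quoted just before the proposition and available in \cite{Wan,Tay}. With it in hand there is no real obstacle, and the proposition is Witt's theorem specialized and made explicit for this family of spaces.
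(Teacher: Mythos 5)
Your argument is correct, but you should know that the paper itself offers no proof of this proposition: it is stated as a recalled well-known fact, with both the orthogonal decomposition into hyperbolic lines and the regularity assertion attributed to Taylor \cite[pp.~116--117]{Tay}. What you have written therefore genuinely fills a gap rather than reproducing an existing argument. Your route is the standard ``Witt basis'' one: every isotropic vector extends to an ordered standard basis whose Gram matrix is the fixed block-diagonal matrix of hyperbolic blocks (plus a final $1$ when $n$ is odd); any two such bases have identical Gram matrices, so the change-of-basis map is an isometry, hence lies in $GU(n,q)$, and is the unique group element carrying one basis to the other. You also correctly isolate the one nontrivial arithmetic input — that the anisotropic kernel of a non-degenerate Hermitian space over $\mathbb{F}_{q^2}$ has dimension at most $1$ — which is precisely the fact the paper quotes from \cite{Wan} and \cite{Tay} immediately before the proposition, so leaning on it is consistent with the paper's own level of rigor. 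Two minor observations, neither affecting validity. First, your reading of ``regular/semiregular'' is the only tenable one: $G$ acts regularly on the set of ordered standard bases, but it is \emph{not} semiregular on $\Phi$ itself (the stabilizer of an isotropic vector already has order $q$ when $n=2$, since $|GU(2,q)|=q(q+1)(q^2-1)$ while $|\Phi(2,q)|=(q^2-1)(q+1)$); the proposition's last sentence should be read, as you do, as transitivity on $\Phi$ together with regularity on the bases. Second, you invoke Proposition~\ref{exis} in the form ``every nonzero isotropic vector lies in a hyperbolic pair,'' which is how the paper itself uses it (e.g.\ in Lemma~\ref{rnote}), even though as literally stated that proposition presupposes a given two-dimensional subspace $L$, which would need to be non-degenerate for the conclusion to hold inside $L$.
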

From these propositions we can deduce the following.  
\begin{theorem}\label{vectrans}
$GU(n,q)$ acts transitively on $\Phi(n,q)$, for all $n\geq 2$ and all prime powers $q$.
\end{theorem}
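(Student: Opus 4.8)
The plan is to deduce the theorem directly from Propositions~\ref{exis} and~\ref{decomp}, together with the standard Witt decomposition of a non-degenerate unitary space recalled above. Fix a prime power $q$ and an integer $n\ge 2$, and let $x,y\in\Phi(n,q)$ be two isotropic vectors; the goal is to produce $U\in G=GU(n,q)$ with $xU=y$.

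First I would extend $x$ to a standard (hyperbolic) basis of $V$. Since $n\ge 2$, choose a two-dimensional subspace $L_1\ni x$; as $x$ is isotropic, Proposition~\ref{exis} provides a vector $v_1$ with $L_1=\mathrm{Span}\{x,v_1\}$ and $(x,v_1)$ a hyperbolic pair. Because $L_1$ is non-degenerate, so is its orthogonal complement $L_1^{\bot}$, a unitary space of dimension $n-2$, and $V=L_1\bot L_1^{\bot}$. Applying this recursively to $L_1^{\bot}$ (equivalently, invoking the Witt decomposition recalled before Proposition~\ref{decomp}) yields $V=L_1\bot L_2\bot\cdots\bot L_m\bot W$, where each $L_i$ carries a hyperbolic pair $(u_i,v_i)$ with $u_1=x$, and $W$ is anisotropic with $\dim W\in\{0,1\}$; when $\dim W=1$ pick $w\in W$ with $\langle w,w\rangle=1$. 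Running the identical construction starting from $y$ gives a second decomposition of this shape whose first basis vector is $y$.

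Then I would invoke Proposition~\ref{decomp}: $G$ acts regularly on the ordered standard bases $(u_1,\dots,u_m,v_1,\dots,v_m,w)$, so in particular any two such bases lie in a single $G$-orbit. Hence there is $U\in G$ carrying the basis built from $x$ onto the basis built from $y$, and comparing first coordinates gives $xU=y$, which establishes transitivity. (Equivalently, this last step is Witt's extension theorem: the isometry of the hyperbolic line $L_1$ onto $L_1'$ given by $x\mapsto y$ and $v_1\mapsto v_1'$ extends to an isometry of $V$, i.e.\ to an element of $G$.) The only point needing care is the recursive step --- that the orthogonal complement of a non-degenerate hyperbolic subspace is again non-degenerate, so that Proposition~\ref{exis} can be reapplied until only an anisotropic $W$ of dimension at most $1$ remains --- but this is exactly the decomposition quoted from \cite[pp.~116--117]{Tay}, so no genuine obstacle remains; the hypothesis $n\ge 2$ is used only to guarantee the existence of a first hyperbolic pair containing $x$.
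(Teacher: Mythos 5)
Your proof is correct. It uses the same two ingredients as the paper --- Proposition~\ref{exis} to complete an isotropic vector to a hyperbolic pair, and Proposition~\ref{decomp} (regularity of $G$ on ordered standard bases) to move one such basis onto another --- but it is organized differently. The paper fixes a single decomposition $V = L_1 \bot L_2 \bot \cdots \bot L_m$ adapted to $u$ and then argues in three cases, according to whether $v$ lies in $L_1^{\bot}$, is a second basis vector of $L_1$, or is a scalar multiple of $u$ (the last case reduced to the second by composing two group elements). You instead build one standard basis beginning with $x$ and a second standard basis beginning with $y$, and apply Proposition~\ref{decomp} (or, equivalently, Witt's extension theorem, Proposition~\ref{Witt}, applied to the isometry $x\mapsto y$, $v_1\mapsto v_1'$ of hyperbolic lines) a single time to carry the first basis onto the second. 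Your version is more uniform: it needs no case distinction, and in particular it transparently handles vectors $v$ lying neither in $L_1$ nor in $L_1^{\bot}$ (e.g.\ $v=u_1+u_2$ when $n\ge 4$), a configuration the paper's explicit three-way split does not literally cover. The only extra point you must supply --- and do --- is that $L_1^{\bot}$ is again non-degenerate, so the construction of the adapted basis can be iterated down to an anisotropic remainder of dimension at most~$1$; note also that the two decompositions automatically have the same Witt index $m$ and the same $\dim W$, so Proposition~\ref{decomp} applies to the pair of ordered bases.
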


\begin{proof}
Let $u,v\in \Phi(n,q)$, and let $(u,u')$ be a hyperbolic pair of $V=\mathbb{F}_{q^2}^n$, which exists by Proposition \ref{exis}. Define $L_1$ as the hyperbolic line spanned by $(u,u')$, and decompose $V$ into hyperbolic lines
$$V = L_1 \bot L_1^\bot = L_1\bot L_2\bot \cdots\bot L_m.$$ Now it is shown that there exists a unitary matrix that takes $u$ to $v$ in case by case.
\begin{enumerate}
\item[(i)] Suppose $v\in L_1^\bot$.
Then again by the above proposition there is a hyperbolic pair $(v,v')$ that spans some $L_i$ with $i\neq 1$. We can choose hyperbolic pairs arbitrarily to form a basis for each of the remaining hyperbolic lines. Then we have a basis for $V$ including $u$ and $v$ as basis elements. By Proposition \ref{decomp}, $GU(n,q)$ acts regularly on these vectors, and thus transitively. Hence, there is a unitary matrix that takes $u$ to $v$.
\item[(ii)] Suppose $v\in L_1$ and $(u,v)$ forms a basis for $L_1$.
Then again we can arbitrarily choose hyperbolic pairs to extend to a basis for $V$ and apply Proposition \ref{decomp} to obtain a unitary matrix that takes $u$ to $v$.\\
\item[(iii)] Suppose $v\in L_1$ and $(u,v)$ does not form a basis for $L_1$.
Then $u$ and $v$ are linearly dependent, i.e. $v = \beta u$. Recall that $(u,u')$ does form a basis for $L_1$, so $(v,u')$ also forms a basis for $L_1$. By (ii), there is a unitary matrix $U\in GU(n,q)$ such that $vU = u'$. Similarly, there is a unitary matrix $U'\in GU(n,q)$ such that $uU'= u'$. Hence, there is a unitary matrix $U'U^{-1}\in GU(n,q)$ that takes $u$ to $v$, since
$$uU'U^{-1} = u'U^{-1} = v.$$
\end{enumerate}
Therefore, $GU(n,q)$ indeed acts transitively on the set of isotropic vectors.
\end{proof}

In order to classify the orbitals of the transitive permutation group $(GU(n,q), \Phi(n, q))$, 
we will first define the desired partition of $\Phi(n,q)\times\Phi(n,q)$, and then proceed to show that these are indeed the orbitals of the action of $GU(n,q)$ on $\Phi(n,q)$; and so the desired association relations.

Let $\alpha$ be an arbitrary but fixed primitive element of $\mathbb{F}_{q^2}$. Consider the following sets where $i,j\in[q^2-2]$ and $\Phi=\Phi(n,q)$.
\[\begin{array}{c}
S_{\alpha^i}(n,q) = \{(x,y)\in \Phi\times \Phi : y = \alpha^i x\}\\
R_{\alpha^j}(n,q)=\{(x,y)\in \Phi\times \Phi : \langle x,y\rangle = \alpha^j\}\\
T(n,q) = \{(x,y)\in \Phi\times \Phi: \langle x,y\rangle = 0, y\notin {Span}\{ x\}\}.\end{array} \qquad \qquad (*)\]
\begin{lemma}
For any $n$ and $q$ and for each $i,j\in[q^2-2]$, the sets $S_{\alpha^i}$, $R_{\alpha^j}$'s, and $T$ defined above partition $\Phi \times \Phi$, i.e.
$$\Phi \times \Phi \ \ = \ \ T \ \cup \ \bigcup_{i=0}^{q^2-2} S_{\alpha^i} \ \cup \ \bigcup_{j=0}^{q^2-2} R_{\alpha^j}.$$
\end{lemma}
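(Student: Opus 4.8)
The plan is a direct case analysis on a pair $(x,y)\in\Phi\times\Phi$, sorting it according to the value of the Hermitian inner product $\langle x,y\rangle$ together with whether or not $y$ is a scalar multiple of $x$. Two elementary observations carry the whole argument. First, because $\alpha$ is a primitive element, $\mathbb{F}_{q^2}^{*}=\{\alpha^{0},\alpha^{1},\dots,\alpha^{q^{2}-2}\}$, so every nonzero scalar $\mu\in\mathbb{F}_{q^2}^{*}$ equals $\alpha^{k}$ for exactly one $k\in[q^{2}-2]$. Second, if $y\in\mathrm{Span}\{x\}$, say $y=\beta x$, then $\langle x,y\rangle=\overline{\beta}\,\langle x,x\rangle=0$ because $x$ is isotropic; contrapositively, $\langle x,y\rangle\neq 0$ already forces $x$ and $y$ to be linearly independent.

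To see that the union is all of $\Phi\times\Phi$, fix $(x,y)\in\Phi\times\Phi$ and split into three situations. If $\langle x,y\rangle\neq 0$, then $\langle x,y\rangle=\alpha^{j}$ for a unique $j\in[q^{2}-2]$, so $(x,y)\in R_{\alpha^{j}}$. If $\langle x,y\rangle=0$ but $y\notin\mathrm{Span}\{x\}$, then $(x,y)\in T$ by definition. If $\langle x,y\rangle=0$ and $y\in\mathrm{Span}\{x\}$, write $y=\beta x$; since $y$ is a nonzero vector of $\Phi$ we have $\beta\neq 0$, hence $\beta=\alpha^{i}$ for a unique $i\in[q^{2}-2]$ and $(x,y)\in S_{\alpha^{i}}$. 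Thus every pair lies in one of the listed blocks.

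For pairwise disjointness I would argue as follows. The relations $R_{\alpha^{j}}$ are mutually disjoint because the value $\langle x,y\rangle$ determines $j$ uniquely; moreover each $R_{\alpha^{j}}$ meets neither any $S_{\alpha^{i}}$ nor $T$, since a pair of $S_{\alpha^{i}}$ has $y=\alpha^{i}x$ and hence $\langle x,y\rangle=0$ by the second observation above, while a pair of $T$ has $\langle x,y\rangle=0$ by definition, whereas $\alpha^{j}\neq 0$. The relations $S_{\alpha^{i}}$ are mutually disjoint because a pair $(x,y)$ with $x\neq 0$ determines the scalar $\beta$, and therefore the index $i$, uniquely from $y=\beta x$; and each $S_{\alpha^{i}}$ is disjoint from $T$ because membership in $S_{\alpha^{i}}$ requires $y\in\mathrm{Span}\{x\}$ while membership in $T$ requires $y\notin\mathrm{Span}\{x\}$. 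Having checked every pair of blocks, I conclude that the listed sets are pairwise disjoint and cover $\Phi\times\Phi$, i.e. they partition it.

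There is no genuine obstacle here; the single point deserving a moment's care is the elementary fact that a scalar multiple of an isotropic vector is orthogonal to it, which is precisely what prevents the $R$-type relations from overlapping the $S$-type and $T$-type ones. I would also remark that the statement asserts only disjointness and coverage, not that each block is nonempty: in fact $T=\emptyset$ when $n\in\{2,3\}$, since in those cases $x^{\perp}$ contains no isotropic vector outside $\mathrm{Span}\{x\}$, whereas every $S_{\alpha^{i}}$ and every $R_{\alpha^{j}}$ is nonempty for all $n\ge 2$ (take $(x,\alpha^{i}x)$ for an isotropic $x$, and scale one member of a hyperbolic pair).
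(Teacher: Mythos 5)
Your proof is correct and follows essentially the same route as the paper's: a case analysis on whether $\langle x,y\rangle$ is nonzero, and, when it vanishes, on whether $y\in\mathrm{Span}\{x\}$, using that a primitive element enumerates $\mathbb{F}_{q^2}^{*}$ and that $y=\beta x$ with $x$ isotropic forces $\langle x,y\rangle=0$. You simply spell out the pairwise-disjointness checks that the paper leaves implicit; no further comment is needed.
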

\begin{proof}
This follows from the fact that if $\langle x,y\rangle\neq 0$, then $(x,y)$ lies in exactly one of the $R_{\alpha^i}$'s. If $\langle x,y\rangle= 0$ then either $(x,y)\in T$ or $y\in {Span}\{x\}$. In the latter case, $y = \alpha^i x$ for some $i$, implying that $(x,y)\in S_{\alpha^i}$. Hence, each element of $\Phi\times \Phi$ lies in exactly one of the sets above $(*)$.
\end{proof}

Notice that the three types of orbitals: $S_\alpha$'s, $R_\alpha$'s, and $T$, correspond directly with Cases (iii), (ii), and (i) respectively, addressed in the proof of Theorem \ref{vectrans}. The next task is to show that these sets indeed form the orbitals of the group action, that is, showing that $\Phi(n,q)\times \Phi(n,q)$ is not partitioned any finer or courser by the action. This is displayed by showing each set closed under the group action so that none of them need to be combined to form the relations, and that $GU(n,q)$ does not refine any further. 
We will use the following result, known as Witt's Extension Theorem, repeatedly.

\begin{proposition}[Witt's Extension Theorem \cite{Wit}] \label{Witt}  Suppose $V_1, V_2\subseteq \mbb{F}_{q^2}^n$ are subspaces and that there is a linear isomorphism $U':V_1\rightarrow V_2$ such that $\langle xU', yU'\rangle =\langle x, y\rangle $ for any $x,y\in V_1$. Then there exists $U\in GU(n,q)$ such that $U|_{V_1}= U'$.
\end{proposition}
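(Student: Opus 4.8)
The plan is to prove Proposition~\ref{Witt} by induction, taking as induction parameter the pair $\big(\dim(V_1\cap V_1^{\perp}),\,\dim V_1\big)$ ordered lexicographically; since $U'$ is an isometric isomorphism we have $\dim V_1=\dim V_2$ and $\dim(V_1\cap V_1^{\perp})=\dim(V_2\cap V_2^{\perp})$, so this parameter does not depend on which of $V_1,V_2$ we look at, and the base case $\dim V_1=0$ is trivial. The work splits into two steps: (a) reducing to the situation in which the Hermitian form restricted to $V_1$ is non-degenerate, and (b) treating that non-degenerate situation.

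For (a), suppose $V_1\cap V_1^{\perp}\neq 0$ and choose a nonzero $u$ in it; it is automatically isotropic, and $v:=uU'$ lies in $V_2\cap V_2^{\perp}$ because $U'$ is an isometry. Write $V_1=\langle u\rangle\oplus V_1'$ with $u\perp V_1'$ and put $V_2'=V_1'U'$. The idea is to adjoin one hyperbolic partner of $u$. Since $u\notin V_1'=((V_1')^{\perp})^{\perp}$, the functional $x\mapsto\langle u,x\rangle$ is not identically zero on $(V_1')^{\perp}$, so there is $u^{*}\in(V_1')^{\perp}$ with $\langle u,u^{*}\rangle=1$; replacing $u^{*}$ by $u^{*}+cu$ and using that $c+\bar c=\lambda$ is solvable in $\mathbb{F}_{q^2}$ for every $\lambda\in\mathbb{F}_q$, one may also arrange $\langle u^{*},u^{*}\rangle=0$. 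Doing the same on the $V_2$-side to get $v^{*}$, extend $U'$ to a linear isomorphism $\langle u^{*}\rangle\oplus V_1\to\langle v^{*}\rangle\oplus V_2$ by $u^{*}\mapsto v^{*}$; the only inner products that need rechecking are $\langle u^{*},u\rangle=\langle v^{*},v\rangle=1$, $\langle u^{*},u^{*}\rangle=\langle v^{*},v^{*}\rangle=0$, and $\langle u^{*},x\rangle=0=\langle v^{*},xU'\rangle$ for $x\in V_1'$, so the extension is again an isometry of subspaces. Since $u,u^{*}$ span a hyperbolic (hence non-degenerate) plane orthogonal to $V_1'$, the radical of the enlarged subspace equals $\mathrm{rad}(V_1')$, and from $\mathrm{rad}(V_1)=\langle u\rangle\oplus\mathrm{rad}(V_1')$ this radical is one dimension smaller than $V_1\cap V_1^{\perp}$ (while $\dim$ has grown by $1$)---so the induction parameter strictly decreases. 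The induction hypothesis then produces $U\in GU(n,q)$ extending the enlarged map, and $U|_{V_1}=U'$.

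For (b), assume the form on $V_1$ is non-degenerate with $\dim V_1\ge 1$; then $V_1$ is not totally isotropic (a Hermitian form vanishing on every vector vanishes identically), so it contains an anisotropic $u$. Write $V_1=\langle u\rangle\perp W_1$ with $W_1=u^{\perp}\cap V_1$, which is again non-degenerate and of dimension $\dim V_1-1$; set $v=uU'$ and $W_2=W_1U'=v^{\perp}\cap V_2$. Applying the induction hypothesis to $U'|_{W_1}$ gives $U_1\in GU(n,q)$ extending it; put $v'=uU_1$. Then $v$ and $v'$ both lie in the non-degenerate space $W_2^{\perp}$ and have the same nonzero norm $\langle u,u\rangle$, and since $V=W_2\perp W_2^{\perp}$ it suffices to find an isometry of $W_2^{\perp}$ sending $v'$ to $v$: extending it by the identity on $W_2$ and composing with $U_1$ yields an extension of $U'$. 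That last point is the transitivity of the unitary group of a non-degenerate Hermitian space on vectors of a given nonzero norm, which follows by splitting off $\langle v\rangle$ and $\langle v'\rangle$ and invoking the uniqueness up to isometry of non-degenerate Hermitian forms of a fixed dimension over $\mathbb{F}_{q^2}$---the same classification of forms underlying the decomposition in Section~\ref{vecorbitsec}.

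I expect step (a) to be the real obstacle: a degenerate $V_1$ cannot be handled by stripping off a one-dimensional subspace, so one is forced to enlarge $V_1$---going \emph{up} in dimension while shrinking its radical---and the delicate points are choosing an induction measure for which this enlargement counts as a descent, verifying that the hyperbolic-partner extension is still an isometry, and confirming that the radical really drops by one. Step (b), once the transitivity-on-a-sphere fact is in hand, is the familiar "peel off an anisotropic vector" argument and presents no difficulty.
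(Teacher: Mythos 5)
The paper offers no proof of Proposition~\ref{Witt}: it is quoted as a classical result with a citation to Witt's original article and the background references \cite{Gro,Tay,Wan}, so there is no in-paper argument to measure yours against. Judged on its own, your proof is correct and complete, and it is essentially the standard textbook proof of Witt's extension theorem for Hermitian forms over finite fields. The lexicographic induction on $\bigl(\dim(V_1\cap V_1^{\perp}),\dim V_1\bigr)$ is a clean way to package the two classical reduction steps: adjoining an isotropic hyperbolic partner of a radical vector (which raises the dimension but strictly lowers the radical, hence counts as descent), and then peeling off an anisotropic vector once the restricted form is non-degenerate. The only external input in step (b) --- uniqueness up to isometry of non-degenerate Hermitian forms of a fixed dimension over $\mathbb{F}_{q^2}$, used to get transitivity of the unitary group on vectors of a given nonzero norm --- is proved independently of Witt's theorem (every non-degenerate Hermitian space has an orthonormal basis, via the surjectivity of the norm map quoted as fact (i) in Section~\ref{vecorbitsec}), so there is no circularity. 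Two micro-checks are left implicit but are immediate and worth recording: $u^{*}\notin V_1$, since $u^{*}\in V_1$ would force $\langle u,u^{*}\rangle=0$ because $u$ lies in the radical of $V_1$, and likewise $v^{*}\notin V_2$; these guarantee that the enlarged sums $\langle u^{*}\rangle\oplus V_1$ and $\langle v^{*}\rangle\oplus V_2$ really are direct.
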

 
\begin{lemma}\label{stran}
    $GU(n,q)$ acts transitively on $S_{\alpha^i}$, for each $i\in [q^2-2]$.
\end{lemma}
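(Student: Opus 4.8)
The plan is to show that for any two pairs $(x_1,y_1),(x_2,y_2)\in S_{\alpha^i}$ there is a unitary transformation carrying one to the other, and the key simplification is that a pair in $S_{\alpha^i}$ is completely determined by its first coordinate: since $y_k=\alpha^i x_k$ for $k=1,2$, it suffices to find $U\in GU(n,q)$ with $x_1U=x_2$, because then automatically $y_1U=\alpha^i x_1 U=\alpha^i x_2=y_2$. So the whole statement reduces to the transitivity of $GU(n,q)$ on $\Phi(n,q)$, which is exactly Theorem \ref{vectrans}.

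So the proof is essentially one line. First I would fix $i\in[q^2-2]$ and take arbitrary $(x_1,y_1),(x_2,y_2)\in S_{\alpha^i}$, so that $y_1=\alpha^i x_1$ and $y_2=\alpha^i x_2$ with $x_1,x_2\in\Phi(n,q)$. Next, invoke Theorem \ref{vectrans} to obtain $U\in GU(n,q)$ with $x_1U=x_2$. Then compute $y_1U=(\alpha^i x_1)U=\alpha^i(x_1U)=\alpha^i x_2=y_2$, using that scalar multiplication commutes with the linear action of $U$. Hence $(x_1,y_1)U=(x_2,y_2)$, which gives transitivity on $S_{\alpha^i}$. One should also note in passing that $S_{\alpha^i}$ is indeed $G$-invariant for the same reason (if $y=\alpha^i x$ then $yU=\alpha^i(xU)$ and $xU\in\Phi$ since $U$ preserves the form), so the statement is not vacuous.

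I do not anticipate any real obstacle here; the only thing to be slightly careful about is confirming that $\alpha^i x$ is again isotropic when $x$ is, which is immediate from $\langle \alpha^i x,\alpha^i x\rangle=\alpha^i\overline{\alpha^i}\langle x,x\rangle=0$, so $S_{\alpha^i}$ really consists of pairs of isotropic vectors and the reduction to Theorem \ref{vectrans} is legitimate. (By contrast, the analogous statements for $R_{\alpha^j}$ and $T$ will genuinely require Witt's Extension Theorem, Proposition \ref{Witt}, since there the first coordinate does not determine the second.)
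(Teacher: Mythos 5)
Your proof is correct and follows essentially the same route as the paper's: both reduce the problem to the transitivity of $GU(n,q)$ on $\Phi(n,q)$ (Theorem \ref{vectrans}) by observing that the second coordinate of a pair in $S_{\alpha^i}$ is determined by the first, and both note closure of $S_{\alpha^i}$ under the action via $yU=\alpha^i(xU)$. No issues.
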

\begin{proof}
    Take $(x,y)\in S_{\alpha^i}$, then $y = \alpha^i x$, so for any $U\in GU(n,q)$, we have $yU = \alpha^ixU$. Thus, $(xU,yU) = (x,y)U \in S_{\alpha^i}$. 
    
    To show transitivity, take $(x,y),(w,z)\in S_{\alpha^i}$ we want to show that some $U\in GU(n,q)$ takes $(x,y)$ to $(w,z)$. Since $GU(n,q)$ acts transitively on $\Phi(n,q)$ by Theorem \ref{vectrans}, there exists a $U\in GU(n,q)$ such that $w = xU$. Then $yU = \alpha^ixU = \alpha^iw = z$. Hence, $(x,y)U = (w,z)$, as desired.
\end{proof}

\begin{lemma}\label{rtran}
    $GU(n,q)$ acts transitively on $R_{\alpha^j}$, for each $j\in [q^2-2]$.
\end{lemma}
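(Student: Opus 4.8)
The plan is to mimic the structure of the proof of Lemma \ref{stran}: first check that $R_{\alpha^j}$ is invariant under $GU(n,q)$, then show the action is transitive on it by exhibiting a unitary transformation sending one arbitrary pair to another. Invariance is immediate: if $(x,y)\in R_{\alpha^j}$, then $\langle x,y\rangle = \alpha^j$, and since elements of $GU(n,q)$ preserve the Hermitian form, $\langle xU, yU\rangle = \langle x,y\rangle = \alpha^j$, so $(xU,yU)\in R_{\alpha^j}$.

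For transitivity, take $(x,y),(w,z)\in R_{\alpha^j}$. The key observation is that in each pair, the two vectors are isotropic and span a hyperbolic line (after rescaling): since $\langle x,y\rangle = \alpha^j\neq 0$, the vectors $x$ and $y$ are linearly independent, and $(x, \alpha^{-j}y)$ is a hyperbolic pair spanning a two-dimensional subspace $L = \mathrm{Span}\{x,y\}$; similarly $(w,\alpha^{-j}z)$ is a hyperbolic pair spanning $L' = \mathrm{Span}\{w,z\}$. Now define a linear map $U' : L \to L'$ by $xU' = w$ and $yU' = z$, extended linearly. I would then verify that $U'$ preserves the Hermitian inner product on all of $L$: it suffices to check it on the spanning pairs, i.e. $\langle xU', xU'\rangle = \langle w,w\rangle = 0 = \langle x,x\rangle$, $\langle yU', yU'\rangle = \langle z,z\rangle = 0 = \langle y,y\rangle$, and $\langle xU', yU'\rangle = \langle w,z\rangle = \alpha^j = \langle x,y\rangle$ — all of which hold by the hypotheses on the two pairs. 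By Witt's Extension Theorem (Proposition \ref{Witt}), $U'$ extends to some $U\in GU(n,q)$ with $U|_L = U'$, and then $(x,y)U = (xU,yU) = (w,z)$, as desired.

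The main thing to be careful about is the linear independence claim and the correct bookkeeping with the scalar $\alpha^j$: one must confirm that $x,y$ are genuinely independent so that $U'$ is well-defined on a two-dimensional space, and that the isomorphism $U' : L\to L'$ really is a linear isomorphism (both $L$ and $L'$ are two-dimensional, and $U'$ sends a basis to a basis since $w,z$ are independent by the same argument applied to the pair $(w,z)$). Once these are in place, Witt's theorem does all the heavy lifting, so there is no real obstacle — the proof is essentially a direct application of Proposition \ref{Witt}, and is shorter than the corresponding argument would be without it. I would also remark, either here or just after, that the same Witt-extension technique handles the relation $T(n,q)$: given $(x,y),(w,z)\in T$, both pairs consist of linearly independent isotropic vectors with inner product zero, and the map $x\mapsto w$, $y\mapsto z$ again preserves the form on the spanned plane, so it extends to $GU(n,q)$.
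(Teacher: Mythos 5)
Your proposal is correct and follows essentially the same route as the paper: verify invariance from $UU^*=I$, then define the map $x\mapsto w$, $y\mapsto z$ on the two-dimensional spans, check it is an isometry, and extend it via Witt's Extension Theorem (Proposition \ref{Witt}). Your extra care in justifying that $x,y$ (and $w,z$) are linearly independent --- which holds because $\langle x,y\rangle=\alpha^j\neq 0$ while $x$ is isotropic --- is a detail the paper leaves implicit, but otherwise the arguments coincide.
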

\begin{proof}
    Take $(x,y)\in R_{\alpha^j}$, then $\langle x,y\rangle = xy^*= \alpha^j$, so for any $U\in GU(n,q)$, 
    $$\langle xU, yU\rangle = xU(yU)^* = xUU^*y^* = xy^* = \alpha^j.$$
    Thus, $(xU,yU) = (x,y)U \in R_{\alpha^j}$.
    
    To show transitivity, take $(x,y),(w,z)\in R_{\alpha^j}$. There exists a linear isomorphism
    $$U':{Span}\{x,y\} \to {Span}\{w,z\}$$
    such that $xU' = w$ and $yU' = z$, as they are both 2-dimensional subspaces. Since 
    $$\langle x,x\rangle = \langle y,y\rangle = \langle w,w\rangle = \langle z,z\rangle = 0 \;\mbox{ and }\; \langle x,y\rangle = \langle w,z\rangle = \alpha^j,$$
    $U'$ is an isometry. Therefore, by Witt's Extension Theorem $U'$ extends to a $U\in GU(n,q)$ mapping $(x,y)\mapsto (w,z)$.
\end{proof}

\begin{lemma}\label{ttran}
    $GU(n,q)$ acts transitively on $T$.
\end{lemma}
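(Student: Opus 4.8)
The plan is to mirror the proof of Lemma~\ref{rtran} almost verbatim, the only substantive change being that here the pairing $\langle x,y\rangle$ equals $0$ rather than a nonzero power of $\alpha$, so the linear independence of $x$ and $y$ must be read off the defining condition $y\notin{Span}\{x\}$ instead of from the inner product. First I would check that $T$ is closed under the action of $G=GU(n,q)$: for $(x,y)\in T$ and any $U\in G$ we have $\langle xU,yU\rangle=xUU^{*}y^{*}=xy^{*}=\langle x,y\rangle=0$, and $yU\notin{Span}\{xU\}$ because $U$ is invertible while $y\notin{Span}\{x\}$; hence $(x,y)U\in T$.

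For transitivity, take $(x,y),(w,z)\in T$. Since $y\notin{Span}\{x\}$ and $z\notin{Span}\{w\}$, both ${Span}\{x,y\}$ and ${Span}\{w,z\}$ are $2$-dimensional, so there is a linear isomorphism $U'\colon {Span}\{x,y\}\to {Span}\{w,z\}$ with $xU'=w$ and $yU'=z$. The next step is to verify that $U'$ is an isometry. Because all four vectors are isotropic we have $\langle x,x\rangle=\langle y,y\rangle=\langle w,w\rangle=\langle z,z\rangle=0$; by the definition of $T$ we have $\langle x,y\rangle=\langle w,z\rangle=0$, and consequently $\langle y,x\rangle=\overline{\langle x,y\rangle}=0=\overline{\langle w,z\rangle}=\langle z,w\rangle$. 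Thus the Hermitian form restricts to the identically zero form on both ${Span}\{x,y\}$ and ${Span}\{w,z\}$, so $U'$ preserves it trivially, i.e.\ $\langle aU',bU'\rangle=0=\langle a,b\rangle$ for all $a,b\in {Span}\{x,y\}$. By Witt's Extension Theorem (Proposition~\ref{Witt}), $U'$ extends to some $U\in GU(n,q)$, and then $(x,y)U=(w,z)$, as required.

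I do not expect a genuine obstacle here: the argument is the same isometry-plus-Witt routine used for the $R_{\alpha^j}$, together with the elementary observation that any linear bijection between two totally isotropic subspaces is automatically an isometry. The one point worth flagging is the degenerate range of parameters. When $n\le 3$ the Witt index of the Hermitian space $V$ is $1$, so $V$ contains no $2$-dimensional totally isotropic subspace and $T(n,q)=\emptyset$; the statement is then vacuously true, and the argument above needs no modification since it never invokes nonemptiness of $T$. For $n\ge 4$ the set $T$ is nonempty and the above exhibits it as a single orbit.
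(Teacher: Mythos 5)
Your proof is correct and follows essentially the same route as the paper's: closure under the action via invariance of the Hermitian form and invertibility of $U$, then a linear isomorphism between the two $2$-dimensional totally isotropic spans, checked to be an isometry and extended by Witt's Extension Theorem (Proposition~\ref{Witt}). Your extra remarks — that the form vanishes identically on these spans and that the statement is vacuous for $n\le 3$ — are harmless additions to the same argument.
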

\begin{proof}
    Take $(x,y)\in T$, then $\langle x,y\rangle = 0$, but $y\neq \alpha x$ for any $\alpha\in\mbb{F}_{q^2}$. So for any $U\in GU(n,q)$, 
    $$\langle xU, yU\rangle = xU(yU)^* = xUU^*y^* = xy^* = 0.$$
    Now, by way of contradiction, suppose $yU = \alpha xU$ for some $\alpha\in\mbb{F}_{q^2}$, then $yUU^{-1} = \alpha xUU^{-1}$, implying that $y = \alpha x$, a contradiction. Hence, $(xU,yU)\in T$ as desired.
    
    To show transitivity, take $(x,y),(w,z)\in T$, we have that $y\notin {Span}\{x\}$ and $z\notin {Span}\{w\}$, so $Span\{x,y\}$ and $Span\{w,z\}$ are 2-dimensional subspaces. We proceed similarly to before by taking a linear isomorphism $U':{Span}\{x,y\} \to {Span}\{w,z\}$ such that $xU' = w$ and $yU' = z$. Again, this is an isometry since all of the vectors are isotropic, and $\langle x,y\rangle = \langle w,z\rangle = 0.$
    Applying Witt's Extension Theorem gives a map $U\in GU(n,q)$ with $(x,y)\mapsto (w,z)$.
\end{proof}

This shows that the aforementioned sets have the desired structure, but it remains to show that
these are indeed orbitals by showing that they are nonempty. We show that regardless of $n$ and $q$ each $R_\alpha$ and $S_\alpha$ is nonempty; however, the set $T$ is, in fact, empty for $n=2$ and $n=3$, which will be addressed separately.

\begin{lemma}\label{snote}
$S_{\alpha^i}(n,q) \neq \emptyset$ for all $n$ and $q$, and all $i\in [q^2-2]$.
\end{lemma}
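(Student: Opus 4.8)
The plan is to exhibit, for each $i\in[q^2-2]$, an explicit pair $(x,\alpha^i x)$ both of whose components are isotropic. Since $y=\alpha^i x$ is isotropic whenever $x$ is (because $\langle \alpha^i x,\alpha^i x\rangle = \alpha^i\overline{\alpha^i}\langle x,x\rangle = 0$), and $\alpha^i x\notin\{0\}$ whenever $x\ne 0$, the whole problem reduces to producing a single nonzero isotropic vector $x\in\Phi(n,q)$; then $(x,\alpha^i x)\in S_{\alpha^i}(n,q)$ for every $i$ simultaneously. In other words, $S_{\alpha^i}(n,q)\ne\emptyset$ for all $i$ as soon as $\Phi(n,q)\ne\emptyset$.

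First I would note that $\Phi(n,q)$ is nonempty for every $n\ge 2$ and every prime power $q$: this is already implicit in Propositions \ref{exis} and \ref{decomp}, since $V=\mbb{F}_{q^2}^n$ with $n\ge 2$ has Witt index $m\ge 1$, hence contains a hyperbolic line and therefore an isotropic vector $u$ (the formula $|\Phi(n,q)|=(q^n-(-1)^n)(q^{n-1}-(-1)^{n-1})$ also shows this count is strictly positive for all $n\ge2$). Alternatively, and more self-contained, I would write down a concrete example: choosing a basis in which the form has a hyperbolic line $L_1=\mathrm{Span}\{e_1,e_2\}$ with $\langle e_1,e_1\rangle=\langle e_2,e_2\rangle=0$ and $\langle e_1,e_2\rangle=1$, the vector $x=e_1$ is isotropic and nonzero.

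Then I would finish by taking $x=e_1$ (or any fixed $u\in\Phi(n,q)$) and setting $y=\alpha^i x$. By the computation above $y\in\Phi(n,q)$, and $y=\alpha^i x$ with $i\in[q^2-2]$ exactly says $(x,y)\in S_{\alpha^i}(n,q)$, so $S_{\alpha^i}(n,q)\ne\emptyset$.

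There is essentially no obstacle here; the only thing to be a little careful about is the indexing convention, namely that $i\in[q^2-2]=\{0,1,\dots,q^2-2\}$ ranges over representatives of $\mbb{F}_{q^2}^*$ via $\lambda\mapsto\alpha^\lambda$ so that $S_{\alpha^0}$ is the diagonal relation $R_0$, and that scaling an isotropic vector by any nonzero field element keeps it isotropic and nonzero — both of which are immediate.
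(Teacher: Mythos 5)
Your proof is correct and follows essentially the same route as the paper: fix one nonzero isotropic vector $x$ and pair it with $y=\alpha^i x$, which stays isotropic since $\langle\alpha^i x,\alpha^i x\rangle=\alpha^i\overline{\alpha^i}\langle x,x\rangle=0$. The only cosmetic difference is how the initial isotropic vector is exhibited — the paper writes $x=(1,a,0,\dots,0)$ with $a\bar a=-1$, while you invoke a hyperbolic pair (or the count of $\Phi(n,q)$) — which makes no substantive difference.
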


\begin{proof}
Consider $x = (1,a,0,\dots ,0)$, where $a\in\mbb{F}_{q^2}$ such that $a\bar a = -1$. It is clear that $x\in \Phi(n,q)$. 


Let $y = \alpha^i x$, then
$$\langle y,y\rangle = \langle\alpha^ix, \alpha^ix\rangle = \alpha^i\alpha^{iq}\langle x,x\rangle = 0.$$
So $y\in \Phi(n,q)$ as well, and $(x,y) \in S_{\alpha^i}$, hence $S_{\alpha^i}\neq \emptyset.$
\end{proof}

\begin{lemma}\label{rnote}
$R_{\alpha^j}(n,q) \neq \emptyset$ for all $n$ and $q$, and all $j\in [q^2-2]$.
\end{lemma}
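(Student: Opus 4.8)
The plan is to exhibit, for any prescribed value $\alpha^j$, an explicit pair of isotropic vectors whose Hermitian inner product equals $\alpha^j$, mimicking the construction used in Lemma \ref{snote}. Since we already know (from the propositions in this section) that $V$ contains a hyperbolic pair $(u,v)$ with $u,v$ isotropic and $\langle u,v\rangle = 1$, the natural first step is to rescale one member of this pair. Concretely, set $x = u$ and $y = \alpha^j v$; then $\langle y,y\rangle = \alpha^j \overline{\alpha^j}\langle v,v\rangle = 0$, so $y$ is still isotropic, and $\langle x,y\rangle = \overline{\alpha^j}\langle u,v\rangle = \overline{\alpha^j} = \alpha^{jq}$. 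Because $\alpha$ is primitive, as $j$ ranges over $[q^2-2]$ so does $jq \bmod (q^2-1)$, so this already shows every $R_{\alpha^j}$ is nonempty; alternatively one scales by $\alpha^{j'}$ with $j' q \equiv j$ to hit $\alpha^j$ on the nose, or simply uses $x = u$, $y = \overline{\alpha^j}^{\,-1}$-type adjustments. Either way the point is that a hyperbolic pair plus a scalar multiple does the job.

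For a fully self-contained argument that does not even invoke Proposition \ref{exis}, I would instead write the vectors down coordinatewise, exactly as in Lemma \ref{snote}. Take $x = (1, a, 0, \dots, 0)$ with $a\bar a = -1$ (such $a$ exists since $x\bar x = -1$ has $q+1$ solutions in $\F_{q^2}^*$), which is isotropic. Then look for $y = (b, c, 0, \dots, 0)$ isotropic with $\langle x,y\rangle = \alpha^j$. The conditions are $b\bar b + c\bar c = 0$ and $b\bar b^q\cdot\!$—more precisely $\langle x,y\rangle = 1\cdot \bar b + a\bar c = \alpha^j$. One clean choice: $y = \alpha^j\cdot(1,-a,0,\dots,0)$ up to the conjugation bookkeeping, or better $y = (\,\overline{\alpha^j}, -a\,\overline{\alpha^j}\cdot(\text{unit}),0,\dots)$; in any case $y = \lambda(1,-a,0,\dots,0)$ gives $\langle y,y\rangle = \lambda\bar\lambda(1 - a\bar a) = \lambda\bar\lambda(1-(-1)) = 2\lambda\bar\lambda$, which is $0$ only in characteristic $2$, so this particular $y$ must be adjusted. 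The robust choice is $y = (0, \ldots)$ supported on the same hyperbolic line as a genuine hyperbolic partner of $x$: since $(1,a,0,\dots,0)$ spans an isotropic line inside the $2$-dimensional non-degenerate space $\mathrm{Span}\{e_1,e_2\}$, Proposition \ref{exis} supplies $v$ in that span with $\langle x,v\rangle = 1$, and then $y = \alpha^j v$ (suitably conjugate-adjusted) finishes it.

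The key steps, in order: (1) produce an isotropic $x$ together with an isotropic $v$ satisfying $\langle x, v\rangle = 1$ — either by the explicit two-coordinate construction or by a direct appeal to Proposition \ref{exis}; (2) rescale, $y := \beta v$ for the appropriate $\beta \in \F_{q^2}^*$, and check $y$ remains isotropic using $\langle \beta v, \beta v\rangle = \beta\bar\beta\langle v,v\rangle = 0$; (3) compute $\langle x, y\rangle = \bar\beta\langle x,v\rangle = \bar\beta$ and solve $\bar\beta = \alpha^j$ for $\beta$, which is possible since conjugation is a bijection of $\F_{q^2}^*$; (4) conclude $(x,y)\in R_{\alpha^j}(n,q)$, so the set is nonempty, and note this works uniformly for every $n\ge 2$ because the extra coordinates are simply set to $0$.

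The only subtlety — and the step most prone to a slip — is the conjugation bookkeeping in step (3): one wants $\langle x,y\rangle$ to equal $\alpha^j$ exactly, not $\alpha^{jq}$, so $\beta$ must be chosen as the unique element with $\beta^q = \alpha^j$, equivalently $\beta = \alpha^{jq}$ (using $\alpha^{q^2} = \alpha$). Since $\gcd(q, q^2-1) = 1$, the map $j \mapsto jq \bmod (q^2-1)$ is a permutation of the exponents, so this is harmless, but it must be stated carefully. No genuine obstacle arises; the lemma is essentially a one-line construction once a hyperbolic pair is in hand.
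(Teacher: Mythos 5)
Your proof is correct and takes essentially the same approach as the paper: both use Proposition \ref{exis} to obtain an isotropic partner with $\langle x,v\rangle = 1$ and then rescale one member of the pair by a power of $\alpha$. The paper scales the \emph{first} vector, giving $\langle \alpha^j x, y\rangle = \alpha^j\langle x,y\rangle = \alpha^j$ directly, which sidesteps the conjugation bookkeeping $\bar\beta = \alpha^j$ that you correctly identify and resolve when scaling the second vector.
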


\begin{proof}
Let $x\in \Phi(n,q)$. Then by Proposition \ref{exis}, we can choose an isotropic vector $y\in \Phi(n,q)$ such that $\langle x,y\rangle =1$. 
Consider $\alpha^jx$. Clearly, $\langle \alpha^jx,\alpha^jx\rangle = \alpha^j(\alpha^j)^q\langle x,x\rangle = 0$,
so $\alpha^jx\in\Phi(n,q)$. Moreover, $\langle \alpha^j x,y\rangle = \alpha^j\langle x,y\rangle = \alpha^j$. Hence, $(\alpha^j x,y)\in R_{\alpha^j}.$
\end{proof}

Now we address the set $T$. 

\begin{lemma}\label{ntwo}
If $n=2$ or $n=3$, then $T(n,q) = \emptyset$ for all $q$. If $n\ge 4$, then $T(n,q) \neq \emptyset.$
\end{lemma}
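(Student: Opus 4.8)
The plan is to split the statement into its two halves and handle the small-dimension cases and the large-dimension case separately, in each instance reducing to a concrete computation inside a low-dimensional unitary space.

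First I would dispose of the cases $n=2$ and $n=3$. Here the claim is that there are \emph{no} pairs $(x,y)$ of isotropic vectors with $\langle x,y\rangle=0$ and $y\notin\mathrm{Span}\{x\}$. Suppose such a pair existed. Then $\mathrm{Span}\{x,y\}$ is a totally isotropic $2$-dimensional subspace, since $x,y$ are isotropic and orthogonal, hence every vector $\lambda x+\mu y$ is isotropic: $\langle \lambda x+\mu y,\lambda x+\mu y\rangle=\lambda\bar\mu\langle x,y\rangle+\bar\lambda\mu\langle y,x\rangle=0$. So $T(n,q)\neq\emptyset$ would force the Witt index $m$ of the non-degenerate Hermitian space $\mathbb{F}_{q^2}^n$ to be at least $2$. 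But the Witt index of the $n$-dimensional unitary space is $\lfloor n/2\rfloor$, which is $1$ for $n=2,3$. (If I prefer not to quote the Witt index, I can instead use the orthogonal decomposition $V=L_1\perp W$ with $L_1$ a hyperbolic line and $\dim W\le 1$ guaranteed by Proposition~\ref{exis} and the discussion following it: a totally isotropic $2$-space cannot fit, since its intersection with $L_1$ is at most $1$-dimensional and it must then meet $W$, which has no isotropic vectors.) Either way this contradiction proves $T(2,q)=T(3,q)=\emptyset$.

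For $n\ge 4$ I would simply exhibit an explicit element of $T(n,q)$. Using coordinates in $\mathbb{F}_{q^2}^n$, take a hyperbolic pair in the first two coordinates and another in the next two: for instance let $a\in\mathbb{F}_{q^2}$ with $a\bar a=-1$ (which exists since $x\bar x=-1$ has $q+1$ solutions, as recalled after Proposition~\ref{exis}), and set
\[
x=(1,a,0,0,0,\dots,0),\qquad y=(0,0,1,a,0,\dots,0).
\]
Then $\langle x,x\rangle=1+a\bar a=0$ and $\langle y,y\rangle=1+a\bar a=0$, so both are isotropic; $\langle x,y\rangle=0$ because the supports are disjoint; and $y$ is clearly not a scalar multiple of $x$ since $y$ has zero first coordinate while $x$ does not. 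Hence $(x,y)\in T(n,q)$ and $T(n,q)\neq\emptyset$.

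I do not anticipate a genuine obstacle here; the content is entirely in the (short) impossibility argument for $n=2,3$. The only point requiring a little care is making sure the argument that a totally isotropic $2$-space cannot exist in dimension $2$ or $3$ is airtight — this is exactly the statement that the Witt index is $\lfloor n/2\rfloor$, and it is cleanest to cite the structure theory (Taylor, as already referenced) rather than re-derive it. Everything for $n\ge 4$ is a one-line verification with the explicit vectors above.
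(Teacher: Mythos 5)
Your proof is correct and follows essentially the same route as the paper: a pair in $T(n,q)$ spans a two-dimensional totally isotropic subspace, which forces the Witt index to be at least $2$ and hence $n\ge 4$ (the paper cites Theorem~5.7 of \cite{Wan} for exactly this point), and for $n\ge 4$ you exhibit the identical explicit pair $x=(1,a,0,\dots,0)$, $y=(0,0,1,a,0,\dots,0)$ with $a\bar a=-1$. One caution: your parenthetical alternative for $n=3$ is not airtight as written, since a totally isotropic plane meeting $L_1$ in a line need not contain a nonzero vector of $W$ (its remaining vectors can be of the form $l+w$ with both components nonzero); a cleaner elementary substitute is that $U\subseteq U^{\perp}$ gives $2=\dim U\le n-\dim U$, i.e.\ $n\ge 4$.
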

\begin{proof}
    Notice that an element of $T$ consists of a pair of linearly independent isotropic vectors, meaning they span a two-dimensional totally isotropic subspace. By Theorem 5.7 or \cite{Wan}, we have that $n\ge4$, so $T = \emptyset$ when $n\in\{2,3\}$.
    
    $T$ can be seen to be non-empty for $n\ge 4$ by considering $x=(1,a,0,0,\dots ,0)$ and $y = (0,0,1,a,0,\dots,0)$. Clearly these are isotropic and $\langle x,y\rangle = 0$, but $x\notin{Span}\{y\}$.
\end{proof}

We are finally in a position to identify the association relations of 
$\mc{X}(GU(n,q),\Phi(n,q))$:

\begin{theorem}\label{orb}
For $n\in\{2,3\}$, the relations of $\mc{X}(GU(n,q),\Phi(n,q))$ are the $R_{\alpha^i}(n,q)$'s together with the $S_{\alpha^i}(n,q)$'s for $i\in [q^2-2]$. For $n\geq 4$ the relations are these sets together with $T$. 

Moreover,
\[
    \textnormal{rank}(\mc X(GU(n,q), \Phi(n,q))) = 
    \begin{cases}
    2q^2 - 2 & n\in\{2,3\}\\
    2q^2 - 1 & n\geq 4.
    \end{cases}
\]
\end{theorem}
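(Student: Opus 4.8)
The plan is to use the defining property of a Schurian scheme: since $GU(n,q)$ acts transitively on $\Phi(n,q)$ by Theorem~\ref{vectrans}, the relations of $\mc{X}(GU(n,q),\Phi(n,q))$ are exactly the orbitals, that is, the orbits of $GU(n,q)$ on $\Phi\times\Phi$. Everything needed to identify these orbits has already been prepared, so the proof is a matter of assembling the preceding lemmas.

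First I would invoke the partition lemma to express $\Phi\times\Phi$ as the disjoint union of the sets $S_{\alpha^i}$, $R_{\alpha^j}$ (for $i,j\in[q^2-2]$) and $T$. By Lemmas~\ref{stran}, \ref{rtran} and~\ref{ttran}, each of these sets is invariant under $GU(n,q)$ and the action on it is transitive; hence each one is either empty or a single orbital, and since the sets are pairwise disjoint and each carries a transitive action, the partition is neither coarsened nor refined by the group action. It then only remains to record which of the sets are nonempty: by Lemmas~\ref{snote} and~\ref{rnote} every $S_{\alpha^i}$ and every $R_{\alpha^j}$ is nonempty for all $n$ and $q$, while by Lemma~\ref{ntwo} the set $T$ is empty precisely when $n\in\{2,3\}$ and nonempty when $n\ge 4$. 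This yields exactly the stated list of relations, with $S_{\alpha^0}=S_1=\{(x,x):x\in\Phi\}$ serving as the identity relation $R_0$.

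For the rank I would count the distinct relations. Because $\alpha$ is a primitive element of $\mathbb{F}_{q^2}^*$, the powers $\alpha^0,\dots,\alpha^{q^2-2}$ are pairwise distinct, so the $S_{\alpha^i}$ are $q^2-1$ distinct (nonempty) relations and likewise the $R_{\alpha^j}$ are $q^2-1$ distinct relations; the two families are disjoint since $\langle x,y\rangle=0$ on every $S_{\alpha^i}$ but $\langle x,y\rangle\ne 0$ on every $R_{\alpha^j}$. Hence the number of relations is $(q^2-1)+(q^2-1)=2q^2-2$ when $n\in\{2,3\}$, and, when $n\ge 4$, one additional relation $T$, giving $2q^2-1$.

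I do not expect a genuine obstacle here: the theorem is a synthesis of lemmas already proved. The one point deserving a moment's care is that the rank is an exact count rather than merely an upper bound---that is, that the $S$- and $R$-families each contribute $q^2-1$ \emph{distinct} relations and do not overlap with one another---and this follows immediately from the primitivity of $\alpha$ together with the inner-product condition separating the two families.
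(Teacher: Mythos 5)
Your proposal is correct and takes exactly the route the paper intends: the paper's proof is just ``this follows directly from the preceding lemmata,'' and you have assembled the same lemmas (the partition of $\Phi\times\Phi$, transitivity on each of $S_{\alpha^i}$, $R_{\alpha^j}$, $T$, and the nonemptiness statements including Lemma~\ref{ntwo} for $T$) in the same way, merely spelling out the count of distinct relations explicitly.
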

\begin{proof}
This follows directly from the preceding lemmata.
\end{proof}

\section{Intersection numbers of $\mc{X}(GU(n,q),\Phi(n,q))$}\label{vecintnums}
We now compute the intersection numbers of $\mc X(GU(n,q), \Phi(n,q))$. Theorem \ref{orb} provides enough information  to determine the valencies of $\mc{X}$ for the $n=2$ and $n=3$ cases.

\begin{theorem}\label{val}
For $n\in\{2,3\}$ the valencies $k_i=p^0_{ii'}$ of $\mc X(GU(n,q),\Phi(n,q))$ are
$$\displaystyle \overbrace{1, 1, \dots ,1}^{q^2-1},\;\; \overbrace{\frac{|\Phi(n,q)|}{q^2 - 1} - 1, \  \frac{|\Phi(n,q)|}{q^2 - 1} - 1,\ \dots, \ \frac{|\Phi(n,q)|}{q^2 - 1} - 1}^{q^2 - 1}$$
for $0\leq i\leq D-1$ with $D=2q^2-2$.
\end{theorem}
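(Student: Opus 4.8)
The plan is to split the valencies into two groups according to the two types of surviving relations, the $S_{\alpha^i}$'s and the $R_{\alpha^j}$'s, and compute each.

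First I would handle the relations $S_{\alpha^i}$. For a fixed isotropic vector $x$, the number of $y$ with $(x,y)\in S_{\alpha^i}$ is the number of $y\in\Phi(n,q)$ with $y=\alpha^i x$; since scalar multiples of $x$ are determined by the scalar, there is exactly one such $y$ (namely $\alpha^i x$, which is isotropic and nonzero). Hence $k_i=1$ for each of the $q^2-1$ values $i\in[q^2-2]$. This also confirms these relations are "thin" (each $S_{\alpha^i}$ is the graph of a permutation of $\Phi$), consistent with the semiregular action in Proposition \ref{decomp}.

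Next I would compute the valency $k_j$ attached to $R_{\alpha^j}$. Fix $x\in\Phi(n,q)$; I must count $y\in\Phi(n,q)$ with $\langle x,y\rangle=\alpha^j$. Since $GU(n,q)$ acts transitively on each $R_{\alpha^j}$ (Lemma \ref{rtran}) and on $\Phi$ (Theorem \ref{vectrans}), this count is independent of $x$ and of which nonzero value $\alpha^j$ we pick; so it suffices to count isotropic $y$ with $\langle x,y\rangle=1$ for a convenient $x$, say the first vector of a hyperbolic pair $(x,x')$ in a Witt decomposition $V=\langle x,x'\rangle\perp W$. A direct argument: the vectors $y$ with $\langle x,y\rangle\neq 0$ form an affine-type set, and among all $y$ the condition $\langle x,y\rangle=\alpha^j$ picks out a coset of the hyperplane $x^\perp$; within that coset one then imposes isotropy $\langle y,y\rangle=0$ and counts solutions. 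Alternatively, and more cleanly, I would note that for $n\in\{2,3\}$ the set $T$ is empty (Lemma \ref{ntwo}), so every pair of distinct non-proportional isotropic vectors lies in some $R_{\alpha^j}$; therefore the $R_{\alpha^j}$-relations together partition $\Phi\times\Phi$ minus the union of the $S_{\alpha^i}$'s (including $S_1=R_0$, the identity). Counting: from a fixed $x$ there are $|\Phi(n,q)|$ isotropic vectors total, of which $q^2-1$ are scalar multiples of $x$ (these account for all the $S_{\alpha^i}$ contributions, one apiece); the remaining $|\Phi(n,q)|-(q^2-1)$ isotropic vectors are distributed among the $q^2-1$ relations $R_{\alpha^j}$, and by the transitivity/symmetry just mentioned they are distributed equally. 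Hence each $R_{\alpha^j}$ has valency $\dfrac{|\Phi(n,q)|-(q^2-1)}{q^2-1}=\dfrac{|\Phi(n,q)|}{q^2-1}-1$.

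The main obstacle is justifying the equal distribution of the non-proportional isotropic vectors among the $q^2-1$ classes $R_{\alpha^j}$, i.e. that $k_j$ does not depend on $j$. The quickest route is to produce, for any two nonzero scalars $\lambda,\mu\in\mathbb{F}_q^*$ realized as inner products, a unitary map sending a pair in $R_\lambda$ to a pair in $R_\mu$ — but note $\langle x,y\rangle$ need not lie in $\mathbb{F}_q$, so I should instead use scaling: replacing $x$ by $cx$ with $c\bar c=1$ keeps $x$ isotropic and multiplies $\langle x,y\rangle$ by $c$, and replacing $y$ by $dy$ with $d\bar d=1$ multiplies it by $\bar d$; since $\{c\bar d: c\bar c=d\bar d=1\}$ is the norm-one subgroup, which has order $q+1$, this alone only shows $k_j$ is constant on cosets of that subgroup. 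To get full constancy one combines this with the transitive action of $GU(n,q)$ on each $R_{\alpha^j}$ (Lemma \ref{rtran}) together with the observation that $GU(n,q)$ contains elements scaling a hyperbolic pair $(x,x')\mapsto(cx, c^{-q}x')$ for arbitrary $c\in\mathbb{F}_{q^2}^*$ (Proposition \ref{decomp} gives a regular action on basis vectors), and such an element sends $R_{\alpha^j}$-pairs of the form $(x, \text{(something)})$ to $R_{c\alpha^j}$-pairs; ranging $c$ over $\mathbb{F}_{q^2}^*$ shows all $k_j$ are equal. Once constancy is in hand, the count above is immediate, and the list of valencies with $D=2q^2-2$ follows. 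I would present the scaling-plus-transitivity argument for constancy as the one nontrivial point, and treat the rest as bookkeeping.
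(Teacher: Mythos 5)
Your overall route is the same as the paper's: $k_i=1$ for the $q^2-1$ relations $S_{\alpha^i}$, all the $R_{\alpha^j}$ have a common valency $k$, and then a counting of $\Phi\times\Phi$ (using $T=\emptyset$ for $n\in\{2,3\}$) gives $k=\frac{|\Phi(n,q)|}{q^2-1}-1$. The paper simply asserts that $|R_{\alpha^j}|$ is independent of $j$ and double counts; you correctly flag this constancy as the one point needing justification, but your handling of it is flawed in two places. First, the restriction to norm-one scalars is unnecessary and is what creates your ``cosets of the norm-one subgroup'' obstruction: isotropy is preserved by \emph{every} scalar, since $\langle dy,dy\rangle=d\bar d\,\langle y,y\rangle=0$ for any $d\in\mathbb{F}_{q^2}^*$. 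Hence for fixed $x\in\Phi$ the map $y\mapsto dy$ is a bijection of $\Phi$ carrying $\{y:\langle x,y\rangle=\alpha^j\}$ onto $\{y:\langle x,y\rangle=\bar d\,\alpha^j\}$, and as $d$ ranges over $\mathbb{F}_{q^2}^*$ so does $\bar d$; this alone gives full constancy of $k_j$, with no group element needed. (You are not required to realize the bijection inside $GU(n,q)$; you only need to compare cardinalities of fibers.)

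Second, your backup argument is false as stated: an element of $GU(n,q)$ preserves the Hermitian form, so it maps each $R_{\alpha^j}$ to itself and can never send an $R_{\alpha^j}$-pair to an $R_{c\alpha^j}$-pair with $c\neq 1$ (this invariance is exactly the first half of Lemma \ref{rtran}). The salvageable version of your idea is to compare fibers over different base points: if $U\in GU(n,q)$ satisfies $xU=cx$ (such $U$ exist, e.g.\ extend $x\mapsto cx$, $x'\mapsto \bar c^{\,-1}x'$ on a hyperbolic pair by Witt's theorem), then $\langle x,yU\rangle=c^{-1}\langle x,y\rangle$, so $y\mapsto yU$ bijects the fiber over $x$ with inner-product value $\alpha^j$ onto the fiber over $x$ with value $c^{-1}\alpha^j$, and ranging $c$ over $\mathbb{F}_{q^2}^*$ again yields constancy. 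With either repair the rest of your count goes through exactly as in the paper.
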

\begin{proof}
    Note that $|R_\alpha|$ and $|S_\alpha|$ do not depend on $\alpha$. $|S_\alpha| = |\Phi|$ since each vector $x\in\Phi$ uniquely determines $y=\alpha x \in S_\alpha$ for each $\alpha$, so 
    $$k_0 = k_1 = \cdots = k_{q^2-2} = \frac{|S_\alpha|}{|\Phi|} = 1.$$
    The remaining $k_i$'s are equal, say $k$, so $|R_\alpha| = k|\Phi|$. Hence, because $\Phi\times\Phi$ is partitioned by only the $R_\alpha$'s and the $S_\alpha$'s, we can count the number of pairs of isotropic vectors as
    $$|\Phi|(q^2 -1) + k|\Phi|(q^2-1) = |\Phi|^2$$
    so it follows that 
    \begin{eqnarray*}
    k  \ \ = \ \ \frac{|\Phi|}{q^2 - 1} - 1
    \ \ = \ \  \frac{(q^n - (-1)^n)(q^{n-1} - (-1)^{n-1})}{q^2 -1} - 1.
    \end{eqnarray*}
\end{proof}

The presence of $T$ as an additional orbital makes the computation for the valencies of $\mc X$ much more involved for $n\geq 4$.
We start by relabeling the association relations 
with sequential indexing using $R_l$, $l\in [D]$ with $D=2q^2-2$. Namely, by the cyclic nature of the field, we can more concisely articulate the relations as follows.
\[
R_l =
\begin{cases}
    S_{\alpha^l} & \textnormal{if } 0\le l\le q^2-2\\
    R_{\alpha^l} & \textnormal{if } q^2-1 \le l\le 2q^2 -3\\
    T & \textnormal{if } l = D = 2q^2-2.
\end{cases}
\]


Now we want to compute the intersection numbers $p_{ij}^h$ based on the range in which $h,i,$ and $j$ lie. For the sake of simplicity, we will henceforth refer to the ranges in the following way
$$I_1 := [q^2 - 2],\qquad\qquad I_2 := [2(q^2 - 1) - 1]\setminus I_1.$$
The intersection numbers can be computed based only on whether each of $h,i,j$ is in $I_1, I_2$, or equal to $D$. This results in 27 different cases, though many of them are immediate or found similarly to one another. We include one sample calculation of importance below.

\begin{lemma} \label{h1ijD}
    If $h\in I_1$ and $i=j=D$, then $p_{ij}^h = q^2|\Phi(n-2,q)|$.
\end{lemma}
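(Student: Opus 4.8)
The plan is to count, for a fixed pair $(x,y)\in R_l$ with $l\in I_1$ (so $y=\alpha^l x$), the number of isotropic vectors $z$ with $(x,z)\in R_D$ and $(z,y)\in R_D$; that is, $z$ is isotropic, $z\notin\mathrm{Span}\{x\}$, $z\notin\mathrm{Span}\{y\}=\mathrm{Span}\{x\}$, $\langle x,z\rangle=0$, and $\langle z,y\rangle=\langle z,\alpha^l x\rangle=\overline{\alpha^l}\langle z,x\rangle=0$. Since $\overline{\alpha^l}\neq 0$, the condition $\langle z,y\rangle=0$ is equivalent to $\langle z,x\rangle=0$, and one checks $\langle x,z\rangle=0 \iff \langle z,x\rangle=0$. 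So $p^h_{DD}$ with $h=l\in I_1$ equals the number of isotropic $z$ in $x^\perp$ with $z\notin\mathrm{Span}\{x\}$. By transitivity (Lemma \ref{stran}) this count is independent of the chosen representative, so we may pick $x$ conveniently.

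First I would reduce to counting isotropic vectors in a hyperplane. Take a hyperbolic pair $(x,x')$ with $\langle x,x'\rangle=1$ and decompose $V=\langle x,x'\rangle\perp W_0$ where $W_0=\langle x,x'\rangle^\perp$ is a nondegenerate unitary space of dimension $n-2$. Then $x^\perp=\mathrm{Span}\{x\}\oplus W_0$, so every $z\in x^\perp$ is uniquely $z=cx+w$ with $c\in\mathbb{F}_{q^2}$, $w\in W_0$; moreover $\langle z,z\rangle=\langle w,w\rangle$ since $\langle x,x\rangle=0$ and $\langle x,w\rangle=\langle w,x\rangle=0$. Hence $z$ is isotropic iff $w$ is isotropic in $W_0$ or $w=0$. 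The vectors with $w=0$ are exactly $\mathrm{Span}\{x\}$, which we must exclude. So the count is $q^2\cdot|\Phi(n-2,q)|$: there are $q^2$ choices of $c$ and, for each, exactly $|\Phi(n-2,q)|$ choices of isotropic $w\neq 0$ in $W_0$. Since $z=cx+w$ with $w\neq 0$ automatically has $z\notin\mathrm{Span}\{x\}$, every such $z$ lies in $T$ relative to both $x$ and $y$, giving $p^h_{DD}=q^2|\Phi(n-2,q)|$.

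The main thing to be careful about is the exclusion logic: one must confirm that $z\notin\mathrm{Span}\{x\}$ and $z\notin\mathrm{Span}\{y\}$ are both captured by simply requiring $w\neq 0$ (they are, since $\mathrm{Span}\{x\}=\mathrm{Span}\{y\}$ and $w\neq 0$ forces $z\notin\mathrm{Span}\{x\}$), and that no further constraint is hidden in ``$(x,z)\in R_D$'' versus ``$(x,z)\in R_{\alpha^0}=S_1$'' — the latter is ruled out precisely because $z\neq x$ when $w\neq 0$, and likewise $(x,z)\notin S_{\alpha^i}$ for $i\neq 0$ and $(x,z)\notin R_{\alpha^j}$ because $\langle x,z\rangle=0$. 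I would also note $n\ge 4$ is needed so that $W_0$ has dimension $\ge 2$ and $\Phi(n-2,q)$ is the isotropic-vector count of a genuine unitary space (for $n=4$, $W_0$ is a hyperbolic line and $|\Phi(2,q)|=(q^2-1)(q+1)\neq 0$, consistent with Lemma \ref{ntwo}). The whole argument is a short direct count once the orthogonal decomposition is set up.
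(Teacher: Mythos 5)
Your proof is correct and follows essentially the same route as the paper's: both reduce the conditions to counting isotropic $z$ in $x^\perp\setminus\mathrm{Span}\{x\}$, decompose $x^\perp=\mathrm{Span}\{x\}\oplus W_0$ with $W_0$ a nondegenerate $(n-2)$-dimensional complement, and count $q^2$ scalar choices times $|\Phi(n-2,q)|$ nonzero isotropic vectors in $W_0$. The only difference is cosmetic: the paper works with the explicit representative $x=(1,a,0,\dots,0)$ and vectors $z'$ supported on the last $n-2$ coordinates, whereas you phrase the same decomposition coordinate-free via a hyperbolic pair.
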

\begin{proof}
Since the intersection numbers do not depend on the choice of $x$ and $y$, we let $x = (1,a,0,\dots,0)$, where again $a$ is an element of the field such that $a\bar{a} = -1$, and let $y = \alpha^h x$.
\begin{eqnarray*}
p_{ij}^h &=& \left\vert\left\{ z \in \Phi(n,q): \langle x,z\rangle = 0, z\notin Span\{x\} ,\langle z,y\rangle = 0, y\notin Span \{z\} \right\}\right\vert\\
&=& \left\vert\left\{ z = \beta x + z': \beta\in \F_{q^2}, z_1' = z_2' = 0,\langle z',z'\rangle = 0, z\notin Span\{x\} , y\notin Span\{z\} \right\}\right\vert\\
&=& \left\vert\left\{ z = \beta x + z': \beta\in\F_{q^2}, z_1' = z_2' = 0, z'\in\Phi(n,q)\right\}\right\vert\\
&=& q^2|\Phi(n-2,q)|.
\end{eqnarray*}
Notice that $z$ satisfying the conditions for $x$ here also ensures that it satisfies the conditions for $y$. There are $q^2$ choices for $\beta$ and $|\Phi(n-2,q)|$ choices for $z'$, since it cannot be the zero vector, otherwise it would be in the span of $x$.
\end{proof}

The following result also assisted in several of these calculations. 
\begin{lemma}\label{primes}\quad
In $\mathcal X (GU(n,q),\Phi(n,q))$, \ for each $l\in [D]$, \ the symmetric conjugate relation $R_{l'}$ of $R_l$ lies in the same range as $R_l$; that is, $l,l'\in I_1$, or $l,l'\in I_2$, or $l = l' = D$.  
\end{lemma}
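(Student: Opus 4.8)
The plan is to determine, for each relation $R_l$, the inverse relation $R_{l'}$ explicitly and then observe that it stays in the same range. The three families are defined by algebraic conditions on pairs $(x,y)$, so I would simply transpose those conditions. For the $S$-type relations, if $(x,y)\in S_{\alpha^i}$ then $y=\alpha^i x$, hence $x=\alpha^{-i}y=\alpha^{q^2-1-i}y$ (reducing the exponent mod $q^2-1$), so $(y,x)\in S_{\alpha^{i'}}$ with $i' \equiv -i \pmod{q^2-1}$; in particular $i'\in I_1$ whenever $i\in I_1$. For the $R$-type relations, if $(x,y)\in R_{\alpha^j}$ then $\langle x,y\rangle=\alpha^j$, and since $\langle y,x\rangle=\overline{\langle x,y\rangle}=(\alpha^j)^q=\alpha^{jq}$, we get $(y,x)\in R_{\alpha^{j'}}$ with $\alpha^{j'}=\alpha^{jq}$; note $\alpha^{jq}\in\mbb{F}_{q^2}^*$ is again a nonzero, non-identity element (it equals $1$ only if $\alpha^j=1$ since $x\mapsto x^q$ is a bijection fixing $1$), so $j'\in I_2$ whenever $j\in I_2$. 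For $T$, if $(x,y)\in T$ then $\langle x,y\rangle=0$ and $y\notin\mathrm{Span}\{x\}$; then $\langle y,x\rangle=\overline{0}=0$ and, by linear independence being symmetric, $x\notin\mathrm{Span}\{y\}$, so $(y,x)\in T$, giving $D'=D$.

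Concretely I would organize the proof into the three cases above, each a one- or two-line verification. The only genuinely substantive point is the bookkeeping for the exponents: one must check that the map $i\mapsto i'$ (whether $i\mapsto -i \bmod q^2-1$ for the $S$-family or $j\mapsto jq \bmod q^2-1$ for the $R$-family) sends the index set $\{0,1,\dots,q^2-2\}$ to itself and, crucially, does not send a nonzero residue to $0$ or vice versa — otherwise a non-identity relation could masquerade as the identity. For the $S$-family this is clear since $\gcd$ considerations are irrelevant: $-i\equiv 0$ iff $i\equiv 0$. For the $R$-family, $jq\equiv 0 \pmod{q^2-1}$ forces $q \mid (q^2-1)$ after canceling the unit $q$ modulo $q^2-1$ — indeed $q$ is invertible mod $q^2-1$ because $q\cdot q \equiv 1$, so $jq\equiv 0$ iff $j\equiv 0$. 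This confirms the Frobenius twist permutes $I_2$.

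The main (mild) obstacle is purely notational: making sure the reindexing conventions from the relabeling $R_l = S_{\alpha^l}$ for $0\le l\le q^2-2$, $R_l = R_{\alpha^l}$ for $q^2-1\le l\le 2q^2-3$ are applied consistently, so that for instance the $R$-family index $l$ corresponds to inner product $\alpha^{l-(q^2-1)}$ (a shift), and the Frobenius action on that exponent translates correctly back to the range $I_2$. Once one fixes that $l\in I_2$ encodes the field element $\alpha^{l-(q^2-1)}\in\mbb{F}_{q^2}^*\setminus\{1\}$ and the inverse relation encodes its $q$-th power, the claim is immediate. I expect the whole proof to be three short paragraphs; no estimates or counting arguments are needed, only the symmetry $\langle y,x\rangle=\overline{\langle x,y\rangle}$ and the fact that $a\mapsto a^q$ is a bijection of $\mbb{F}_{q^2}$ fixing $\mbb{F}_q$ pointwise and in particular fixing $0$ and $1$.
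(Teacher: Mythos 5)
Your proof is correct: transposing the defining conditions of each family, using $\langle y,x\rangle=\overline{\langle x,y\rangle}=\langle x,y\rangle^{q}$ and the symmetry of linear independence, gives $S_{\alpha^i}'=S_{\alpha^{-i}}$, $R_{\alpha^j}'=R_{\alpha^{jq}}$ and $T'=T$, which is precisely the claim; the paper states Lemma~\ref{primes} without proof, and this is evidently the intended argument. One small remark: your care about exponents not collapsing to $0$ is not needed for this lemma, since $I_1$ already contains the identity index $0$ and the relation with inner product $1$ (index $q^2-1$) is an ordinary member of $I_2$ rather than the identity, so all that matters is that the transposed pair satisfies the defining condition of the same family.
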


\begin{theorem}\label{val4}
For $n\geq 4$ the valencies of $\mathcal X(GU(n,q),\Phi(n,q))$ are 
$$ \overbrace{1,\ 1,\ \dots, \ 1}^{q^2 - 1},\;\;\; \overbrace{q^{2n-3}, \ q^{2n-3}, \ \dots, \ q^{2n-3}}^{q^2 - 1},\;\;\; q^2|\Phi(n-2,q)|.$$
\end{theorem}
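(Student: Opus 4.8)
The plan is to compute each valency $k_l = |R_l| / |\Phi(n,q)|$ separately for the three ranges of $l$, using the fact that valencies do not depend on the choice of base point and the already-established transitivity lemmas. The case $l\in I_1$ is immediate: as in Theorem \ref{val}, each $S_{\alpha^l}$ satisfies $|S_{\alpha^l}| = |\Phi(n,q)|$ since $x\mapsto (x,\alpha^l x)$ is a bijection, so $k_l = 1$. The case $l = D$ is also already done: Lemma \ref{h1ijD} computes $p_{ij}^h = q^2|\Phi(n-2,q)|$ for $h\in I_1$, $i=j=D$, and taking $h=0$ gives $k_D = p_{0,DD'}^0$; combined with Lemma \ref{primes} (which guarantees $D' = D$) and the standard identity $k_l = p_{ll'}^0$, this yields $k_D = q^2|\Phi(n-2,q)|$. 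So the real content is the middle range $l\in I_2$, i.e.\ showing $k_l = q^{2n-3}$ for the relations $R_{\alpha^j}$.

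For $l\in I_2$ with $R_l = R_{\alpha^j}$, I would fix the base point $x = (1,a,0,\dots,0)$ with $a\bar a = -1$ and count $z\in\Phi(n,q)$ with $\langle x,z\rangle = \alpha^j$. Writing $z = (z_1,\dots,z_n)$, the condition $\langle x,z\rangle = \alpha^j$ reads $z_1^q + a z_2^q = \alpha^j$ (using $\bar a = a^q$), and the isotropy condition reads $z_1\bar z_1 + z_2\bar z_2 + \sum_{k\ge 3} z_k\bar z_k = 0$. The cleanest route is to change coordinates on the hyperbolic line $L_1 = \mathrm{Span}\{x, x'\}$ where $(x,x')$ is a hyperbolic pair: decompose $z = \lambda x + \mu x' + z''$ with $z''\in L_1^\perp$. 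Then $\langle x,z\rangle = \mu$ (since $\langle x,x\rangle = 0$, $\langle x,x'\rangle = 1$), so the linear condition forces $\mu = \alpha^j$ outright, leaving $\lambda\in\mathbb{F}_{q^2}$ free and $z''\in L_1^\perp \cong \mathbb{F}_{q^2}^{n-2}$. The isotropy condition becomes $\lambda\bar\mu + \bar\lambda\mu + \langle z'',z''\rangle = 0$, i.e.\ $\lambda\overline{\alpha^j} + \bar\lambda\alpha^j = -\langle z'',z''\rangle$. For each of the $q^{2(n-2)}$ choices of $z''$, the quantity $\langle z'',z''\rangle$ lies in $\mathbb{F}_q$, and the equation $\lambda\overline{\alpha^j} + \overline{\lambda\overline{\alpha^j}} = c$ for fixed $c\in\mathbb{F}_q$ has exactly $q$ solutions $\lambda$ by fact (ii) recalled in Section \ref{vecorbitsec} (applied after the invertible substitution $\lambda\mapsto \lambda\overline{\alpha^j}$). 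Hence the count is $q\cdot q^{2(n-2)} = q^{2n-3}$, and dividing by... wait — there is no division here, since we directly counted $z$ with $(x,z)\in R_{\alpha^j}$ for the single fixed $x$, which is exactly $k_l$.

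The main obstacle, such as it is, is bookkeeping rather than conceptual: one must be careful that the relabeling $R_l = R_{\alpha^l}$ indexes the $q^2-2$ nonzero-inner-product relations correctly, and that every $z''$ — including $z'' = 0$ — is legitimately counted (when $z'' = 0$ we still get $z = \lambda x + \alpha^j x'\notin\mathrm{Span}\{x\}$, so no pairs are spuriously excluded or included, consistent with $R_{\alpha^j}$ having no sub-restriction on $\mathrm{Span}$). It is also worth double-checking the edge cases $n=2,3$: for $n=2$ the formula gives $q^{2n-3} = q$ and $L_1^\perp = 0$, forcing $z'' = 0$ and recovering $k_l = q$, matching $\frac{|\Phi(2,q)|}{q^2-1} - 1 = \frac{(q^2-1)(q+1)}{q^2-1} - 1 = q$ from Theorem \ref{val}; this consistency check confirms the computation, though $n\ge 4$ is the regime where $T$ is also present. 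Finally, one should note the valencies sum correctly: $(q^2-1)\cdot 1 + (q^2-1)q^{2n-3} + q^2|\Phi(n-2,q)|$ must equal $|\Phi(n,q)|$, which provides an independent arithmetic verification of the three values and can be included as a remark.
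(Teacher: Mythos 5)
Your proposal is correct. For the ranges $l\in I_1$ and $l=D$ you do exactly what the paper does: valency $1$ for each $S_{\alpha^l}$, and $k_D=p_{DD}^0=q^2|\Phi(n-2,q)|$ via Lemma \ref{h1ijD} together with the self-pairedness of $T$. For the middle range the two arguments genuinely diverge. The paper never counts $R_{\alpha^j}$ directly: it observes that the remaining $q^2-1$ valencies share a common value $k$ and extracts $k=q^{2n-3}$ from the double-counting identity $|\Phi|(q^2-1)+k|\Phi|(q^2-1)+k_D|\Phi|=|\Phi|^2$ --- i.e.\ precisely the ``independent arithmetic verification'' you relegate to a closing remark. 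You instead compute $k$ directly by decomposing $z=\lambda x+\mu x'+z''$ along a hyperbolic line and invoking the fact that $w+\bar w=c$ has exactly $q$ solutions. Your route is longer but self-contained: it does not lean on Lemma \ref{h1ijD} or on the closed form of $|\Phi(n,q)|$, and it is the same technique the paper deploys for the harder intersection numbers in Table 1, so it buys a uniform method at the cost of the paper's brevity; the paper's route is shorter but only determines $k$ because the other two families of valencies are already known. One trivial correction to your computation: the Hermitian form is conjugate-linear in the second argument, so $\langle x,\lambda x+\mu x'+z''\rangle=\bar\mu$, and the linear condition forces $\mu=\overline{\alpha^j}$ rather than $\mu=\alpha^j$. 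This does not affect the count, since $\mu$ is still uniquely determined and nonzero, and the substitution $\lambda\mapsto\lambda\bar\mu$ remains invertible.
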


\begin{proof}
    As before, $k_0 = k_1 = \cdots = k_{q^2-1} = 1.$
    Observe that $k_D = p_{DD}^0$ falls into the case where $i = j = D$ and $h\in I_1$, so by Lemma \ref{h1ijD}, $k_D = q^2|\Phi(n-2,q)|.$
    
    Now the remaining valencies are all the same, say $k$, so $|R_\alpha| = k|\Phi(n,q)|$. Hence, again by double counting the number of pairs of isotropic vectors using the partitioning of $\Phi(n,q)\times\Phi(n,q)$ it follows that
    $$|\Phi(n,q)|(q^2 -1) + k|\Phi(n,q)|(q^2-1) + k_D|\Phi(n,q)| = |\Phi(n,q)|^2.$$
    This gives us 
    \begin{eqnarray*}
    k(q^2 - 1) \ \ = \ \ |\Phi(n,q)| - (q^2 -1) - q^2|\Phi(n-2,q)|
     \ \ = \ \  q^{2n-3}(q^2 - 1).
    \end{eqnarray*}
    Therefore, $k = q^{2n-3}$ as desired.
\end{proof}

Now using the fact that the valencies are as above, we complete the calculation of the intersection numbers that are summarized in the table below.

\renewcommand{\arraystretch}{3.2}
\begin{table} \label{inttable}
\caption{\textbf{Intersection Numbers for $\mc X(GU(n,q),\Phi(n,q))$.}}
\begin{center}
\small
\begin{tabular}{||c|c||c|c|c||}
\hline \hline 
\multicolumn{2}{||c||}{}   & $h\in I_1$ & $h\in I_2$ & $h=D$ \\ \hline \hline
{$i\in I_1$} & $j\in I_1$  &  $ \begin{cases} 1 & \mbox{if} \ h\equiv i+j \\  0 & \mbox{else} \end{cases} $     &      0      &    0   \\ \cline{2-5} 
                            & $j \in I_2$ &      0       &   $ \begin{cases} 1 & \mbox{if } h+i\equiv j  \\  0 & \mbox{else} \end{cases} $  &      0 \\ \cline{2-5} 
                            & $j = D$     &       0      &      0      &    1   \\ \hline \hline
{$i\in I_2$} & $j\in I_1$  &      0       & $ \begin{cases} 1 & \mbox{if } h\equiv i+jq \\  0 & \mbox{else} \end{cases}$  &    0   \\ \cline{2-5} 
                            & $j \in I_2$ & ${\begin{cases} q^{2n-3} & \mbox{if } q(h+i) \equiv j\\ 0 & \mbox{else}  \end{cases}}$  &  $ {\begin{cases} |\Phi(n-2,q)| + 1 & \mbox{if}^*\ i+j \equiv h+t \\
q^{2n-5}+(-q)^{n-3} & \mbox{else} \end{cases}}$   &   $q^{2n-5}$  \\ \cline{2-5} 
                            & $j = D$  &  0 &  $|\Phi(n-2,q)|$  &  $q^{2n - 5}$   \\ \hline \hline
{$i = D$}    & $j\in I_1$  & 0 &  0  &  1  \\ \cline{2-5} 
                            & $j \in I_2$ & 0 & $|\Phi(n-2,q)|$  &   $q^{2n-5}$  \\ \cline{2-5} 
                            & $j = D$  &  $q^2|\Phi(n-2,q)|$ & $|\Phi(n-2,q)|$  & $(q^2 -1)^2 + q^4|\Phi(n-4,q)|$ \\ \hline \hline
\end{tabular}
\end{center}
\vspace{0.5cm}
\renewcommand{\arraystretch}{1}
All the congruences are modulo $q^2-1$, (i.e., should read $\pmod{q^2 -1}$), except for the case when $i,j,h\in I_2$ which is congruent modulo $q+1$, that is, $^*\ i+j\equiv h+t \pmod{q+1}$ \ where 
 \[
 t = \begin{cases}
      0 & \textnormal{ if $q$ is even,}\\ 
      \frac{q+1}{2} & \textnormal{ if $q$ is odd.} 
 \end{cases}
 \]
\end{table}

\renewcommand{\arraystretch}{1}

\newpage
\section{Commutativity of $\mc X(GU(n,q), \Phi(n,q))$}
Now that the intersection numbers have been completely determined for all values of $n$ and $q$, the next step is to construct character tables for those association schemes that are commutative, i.e. $p_{ij}^h = p_{ji}^h$ for all $h,i,j\in [D]$. In fact, this is the case only when $q=2$.

\begin{theorem}\label{veccom}
The association scheme $\mc X(GU(n,q), \Phi(n,q))$ is commutative if and only if $q = 2$.
\end{theorem}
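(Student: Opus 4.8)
The plan is to read commutativity off directly from the intersection-number table computed above, since $\mc X(GU(n,q),\Phi(n,q))$ is commutative exactly when $p_{ij}^h=p_{ji}^h$ for all $h,i,j\in[D]$. Swapping $i$ and $j$ transposes the $3\times 3$ block structure of the table indexed by $\{I_1,I_2,\{D\}\}$, so the verification reduces to a short list of block comparisons. Nearly all of them are automatic: every block whose value does not depend on the order of $i$ and $j$ (all blocks involving the index $D$, the block $i,j\in I_1$, and the $i,j\in I_2$ entries for $h\in I_2$ or $h=D$, where the relevant values $|\Phi(n-2,q)|+1$, $q^{2n-5}+(-q)^{n-3}$, $q^{2n-5}$ and the condition $i+j\equiv h+t\pmod{q+1}$ are already symmetric in $i,j$) contributes nothing, and every block that is identically $0$ matches its transpose. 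The only genuine conditions are (a) the comparison of the $i\in I_1$, $j\in I_2$, $h\in I_2$ entry with the $i\in I_2$, $j\in I_1$, $h\in I_2$ entry, which requires $h+i\equiv j\ \Leftrightarrow\ h\equiv j+iq\pmod{q^2-1}$, and (b) the $i,j\in I_2$, $h\in I_1$ entry under $i\leftrightarrow j$, which requires $q(h+i)\equiv j\ \Leftrightarrow\ q(h+j)\equiv i\pmod{q^2-1}$.

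For the forward direction I would show that (a) already fails once $q>2$. Since $q^2-2\ge 1$, the relation $S_{\alpha}$ occurs, i.e. the sequential index $1$ lies in $I_1$; fix any $j\in I_2$. As $I_2$ meets each residue class mod $q^2-1$ exactly once, there is a unique $h_0\in I_2$ with $h_0\equiv j-1\pmod{q^2-1}$, and the table then gives $p_{1j}^{h_0}=1$, whereas $p_{j1}^{h_0}=1$ would force $h_0\equiv j+q$, i.e. $q+1\equiv 0\pmod{q^2-1}$. Because $0<q+1\le q^2-1$ with equality only when $q=2$, this is impossible for $q>2$, so $p_{1j}^{h_0}\ne p_{j1}^{h_0}$ and the scheme is not commutative. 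The $S$- and $R$-relations used here exist for every $n\ge 2$ (Lemmas \ref{snote} and \ref{rnote}), so this handles $n\in\{2,3\}$ (no $T$) and $n\ge 4$ uniformly.

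For the converse, set $q=2$, so $q^2-1=q+1=3$, all table congruences are mod $3$, $q\equiv -1\pmod 3$, and $t=0$. Then (a) holds since $h+i\equiv j$ is equivalent to $h\equiv j-i\equiv j+2i=j+iq\pmod 3$, and (b) holds since $q(h+i)\equiv j$ becomes $h+i+j\equiv 0\pmod 3$, which is symmetric in $i$ and $j$ (as is $q(h+j)\equiv i$). Together with the automatic symmetry of all remaining blocks noted above, this gives $p_{ij}^h=p_{ji}^h$ for all $h,i,j$, so $\mc X(GU(n,2),\Phi(n,2))$ is commutative. The main obstacle is purely organizational: one must keep the sequential relabeling $R_l$ aligned with the $\alpha^i$-labels when transposing blocks and confirm that every index invoked (here $1\in I_1$ and $h_0\in I_2$) names an actual orbital; once that bookkeeping is in place, the argument is a finite check with no analytic content.
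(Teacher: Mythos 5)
Your proof is correct, and it follows the paper's overall strategy of reading commutativity off the intersection-number table, but it differs in two worthwhile ways. For the necessity, the paper's counterexample lives in the $(i,j\in I_2,\ h\in I_1)$ block: it takes $h=q$, $i=q^2-1$, $j=q^2$ and shows $q(h+i)\equiv j$ holds while $q(h+j)\equiv i$ forces $q+1\equiv 0 \pmod{q^2-1}$; you instead use the $(I_1,I_2,I_2)$ versus $(I_2,I_1,I_2)$ comparison with $i=1$, which reduces to exactly the same obstruction $q+1\equiv 0\pmod{q^2-1}$ but with the cleaner values $1$ versus $0$ (and, as you note, manifestly avoids any dependence on $T$ or on the $n$-dependent entries, so it covers $n\in\{2,3\}$ without comment). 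For the sufficiency, the paper specializes to $q=2$, writes out the seven explicit $7\times 7$ intersection matrices $B_0,\dots,B_6$, and verifies $p_{ij}^h=p_{ji}^h$ by inspection; your block-by-block symmetry analysis of the general-$q$ table, isolating the two genuine conditions (a) and (b) and checking them mod $3$, is more systematic and makes transparent exactly where commutativity can fail --- indeed your conditions (a) and (b) together with the necessity argument show that either one alone characterizes $q=2$. Both routes rest on the same prior input (the correctness of Table 1), so neither is logically stronger; yours trades the concrete matrix display for a uniform argument in $q$.
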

\begin{proof}
For the sufficiency of the statement, take $q=2$ and compute the intersection numbers using the table above in terms of $n$. The size of the intersection matrices does not depend on $n$ when $n\ge 4$, and when $n\in\{2,3\}$ we simply ignore the last row and last column of each matrix, as well as the entire last matrix.\footnote{This does not, however, affect the check for commutativity and the intersection matrices for these two cases of $n$ are shown explicitly in the next section.} The intersection matrices of $\mc X(GU(n,2), \Phi(n,2))$ are as follows, where $s = |\Phi(n-2,2)| = 2^{2n-5}- (-2)^{n-3} - 1$.
\[
B_0 = I, \qquad B_1 =
\left(\begin{array}{rrrrrrr}
0 & 1 & 0 & 0 & 0 & 0 & 0 \\
0 & 0 & 1 & 0 & 0 & 0 & 0 \\
1 & 0 & 0 & 0 & 0 & 0 & 0 \\
0 & 0 & 0 & 0 & 0 & 1 & 0 \\
0 & 0 & 0 & 1 & 0 & 0 & 0 \\
0 & 0 & 0 & 0 & 1 & 0 & 0 \\
0 & 0 & 0 & 0 & 0 & 0 & 1
\end{array}\right),
\qquad B_2 =
\left(\begin{array}{rrrrrrr}
0 & 0 & 1 & 0 & 0 & 0 & 0 \\
1 & 0 & 0 & 0 & 0 & 0 & 0 \\
0 & 1 & 0 & 0 & 0 & 0 & 0 \\
0 & 0 & 0 & 0 & 1 & 0 & 0 \\
0 & 0 & 0 & 0 & 0 & 1 & 0 \\
0 & 0 & 0 & 1 & 0 & 0 & 0 \\
0 & 0 & 0 & 0 & 0 & 0 & 1
\end{array}\right)
\]

\[
B_3 = 
\left(\begin{array}{ccccccc}
0 & 0 & 0 & 1 & 0 & 0 & 0 \\
0 & 0 & 0 & 0 & 0 & 1 & 0 \\
0 & 0 & 0 & 0 & 1 & 0 & 0 \\
2^{2n-3} & 0 & 0 & s+1 & s-(-2)^{n-2}+1 & s-(-2)^{n-2}+1 & 2^{2n-5} \\
0 & 0 & 2^{2n-3} & s-(-2)^{n-2}+1 & s+1 & s-(-2)^{n-2}+1 & 2^{2n-5} \\
0 & 2^{2n-3} & 0 & s-(-2)^{n-2}+1 & s-(-2)^{n-2}+1 & s+1 & 2^{2n-5} \\
0 & 0 & 0 & s & s & s & 2^{2n-5}
\end{array}\right)
\]

\[
B_4 = 
\left(\begin{array}{lllllll}
0 & 0 & 0 & 0 & 1 & 0 & 0 \\
0 & 0 & 0 & 1 & 0 & 0 & 0 \\
0 & 0 & 0 & 0 & 0 & 1 & 0 \\
0 & 0 & 2^{2n-3} & s-(-2)^{n-2}+1 & s+1 & s-(-2)^{n-2}+1 & 2^{2n-5}  \\
0 & 2^{2n-3} & 0 & s-(-2)^{n-2}+1 & s-(-2)^{n-2}+1 & s+1 & 2^{2n-5}  \\
2^{2n-3} & 0 & 0 & s+1 & s-(-2)^{n-2}+1 & s-(-2)^{n-2}+1 & 2^{2n-5} \\
0 & 0 & 0 & s & s & s & 2^{2n-5}
\end{array}\right)
\]

\[
B_5 =
\left(\begin{array}{lllllll}
0 & 0 & 0 & 0 & 0 & 1 & 0 \\
0 & 0 & 0 & 0 & 1 & 0 & 0 \\
0 & 0 & 0 & 1 & 0 & 0 & 0 \\
0 & 2^{2n-3} & 0 & s-(-2)^{n-2}+1 & s-(-2)^{n-2}+1 & s+1 & 2^{2n-5} \\
2^{2n-3} & 0 & 0 & s+1 & s-(-2)^{n-2}+1 & s-(-2)^{n-2}+1 & 2^{2n-5} \\
0 & 0 & 2^{2n-3} & s-(-2)^{n-2}+1 & s+1 & s-(-2)^{n-2}+1 & 2^{2n-5}\\
0 & 0 & 0 & s & s & s & 2^{2n-5}
\end{array}\right)
\]

\[
B_6 =   
\left(\begin{array}{rrrrrrc}
0 & 0 & 0 & 0 & 0 & 0 & 1 \\
0 & 0 & 0 & 0 & 0 & 0 & 1 \\
0 & 0 & 0 & 0 & 0 & 0 & 1 \\
0 & 0 & 0 & s & s & s & 2^{2n-5} \\
0 & 0 & 0 & s & s & s & 2^{2n-5} \\
0 & 0 & 0 & s & s & s & 2^{2n-5} \\
4s & 4s & 4s & s & s & s & 2^{2n-5} - (-2)^{n-1} - 7
\end{array}\right)\;\;\;\;
\]
It can be verified by inspection that $p_{ij}^h = p_{ji}^h$ for all values of $h,i,$ and $j$. 

For the necessity, we show that for each $q\geq 3$, there exist values of $h,i,j$ such that $p_{ij}^h \neq p_{ji}^h$. 
Set $h = q$, $i = q^2 - 1$ and $j = q^2$, which falls into the scenario of $h\in I_1$, and $i,j\in I_2$, so 
\[
p_{ij}^h = 
\begin{cases}
q^{2n-3} & \mbox{if }q(h+i) \equiv j\pmod{q^2 -1}\\
0 & \mbox{else}.
\end{cases}
\]
We have $j = q^2 \equiv 1 \pmod{q^2-1}$, and
$$q(h+i) \equiv q(q + (q^2 -1))\equiv 1 \pmod{q^2-1}$$
so $p_{ij}^h = q^{2n-3}$. On the other hand, for $p_{ji}^h$ we have $i \equiv 0\pmod{q^2-1}$, but
$$q(h+j) \equiv q(q + q^2) \equiv q^2 + q \equiv q+1 \pmod{q^2 - 1},$$
which is only congruent to $0\pmod{q^2 - 1}$ when $q =2$. Hence, $p_{ji}^h = 0$ for all $q\geq 3$. Therefore, $p_{ij}^h \neq p_{ji}^h$, so the association scheme is not commutative.
\end{proof}

From this theorem, character tables for the association scheme can be investigated only when $q=2$, which we do in the subsequent section for generalized $n$.

\section{Character tables of $\mc{X}(GU(n,2),\Phi(n,2))$}\label{vecchars}

We now proceed with the intention of calculating the character table of $\mc{X}(GU(n,q), \Phi(n,q))$ for $q=2$, which requires obtaining the eigenvalues of the intersection matrices. We seek to construct the generalized character table for arbitrary $n$. First, the $n=2$ and $n=3$ cases are dealt with individually, since they produce a character table missing a row and a column of those obtained from larger $n$.

\subsection{Character tables of $\mc X(GU(2,2),\Phi(2,2))$ and $\mc X(GU(3,2),\Phi(3,2))$}\label{n2n3}
First, we compute the intersection matrices for $\mc X = \mc X(GU(2,2),\Phi(2,2))$ and $\mc X = \mc X(GU(3,2),\Phi(3,2))$ using the table of intersection numbers produced in the previous section. We then compute the eigenvalues of each of the intersection matrices for each case of $n$. These are arranged to construct character tables satisfying the orthogonality conditions in Proposition \ref{eigen}. The multiplicities $m_i$ are calculated using the formula 
$$\sum_{h=0}^D \frac1{k_h}p_h(i)\overline{p_h(j)} = \frac{|\Phi(n,2)|}{m_j}\delta_{ij}$$
For each character table, $\omega = \frac12(-1+\sqrt3 \;i)$, i.e. the primitive third root of unity, and the rightmost column entries are the multiplicities, $m_i$.\\

\begin{table}[!h]
\caption{\textbf{Character Table of $\mc X(GU(2,2), \Phi(2,2))$}}
\begin{center}

$P\ = $\ \begin{tabular}{|rrrrrr|r|}
\hline
1 & 1 & 1 & 2 & 2 & 2 & 1 \\
1 & $\omega$ & $\overline{\omega}$ & 2 & $2\overline{\omega}$ & $2{\omega}$ & 1\\
1 & $\overline{\omega}$ & $\omega$ & {2} & $2{\omega}$ & $2\overline{\omega}$ & 1\\
1 & 1 & 1 & $-1$ & $-1$ & $-1$ & 2\\
1 & $\omega$ & $\overline{\omega}$ & $-1$ & $-\overline{\omega}$ & $-{\omega}$ & 2 \\
1 & $\overline{\omega}$ & $\omega$ & $-1$ & $-{\omega}$ & $-\overline{\omega}$ & 2 \\
\hline
\end{tabular}\\
\end{center}
\end{table}

\begin{table}[!h]
\caption{\textbf{Character Table of $\mc X(GU(3,2), \Phi(3,2))$}}
\begin{center}
$P\ = \ $ \begin{tabular}{|rrrrrr|r|}
\hline
1 & 1 & 1 & 8 & 8 & 8 & 1 \\
1 & $\omega$ & $\overline{\omega}$ & $-4$ & $-4\overline{\omega}$ & $-4{\omega}$ & 3\\
1 & $\overline{\omega}$ & $\omega$ & $-4$ & $-4{\omega}$ & $-4\overline{\omega}$ & 3\\
1 & 1 & 1 & $-1$ & $-1$ & $-1$ & 8\\
1 & $\omega$ & $\overline{\omega}$ & $2$ & $2\overline{\omega}$ & $2{\omega}$ & 6 \\
1 & $\overline{\omega}$ & $\omega$ & $2$ & $2{\omega}$ & $2\overline{\omega}$ & 6 \\
\hline
\end{tabular}
\end{center}
\end{table}

Notice that 
\[
|\Phi(n,2)| = \sum_{i=0}^D m_i = 
    \begin{cases}
    9 & \mbox{if } n=2\\
    27  & \mbox{if } n=3,   
    \end{cases}
\]
and the zeroth row gives the valencies, as desired. It is straightforward to verify that the row sums are 0 other than the zeroth row, and that for all $h,i,j\in [D]$,
$$p_{ij}^h = \frac1{|\Phi|k_h} \sum_{l=0}^D p_i(l)p_j(l)\overline{p_h(l)}m_l.$$

We note that the character tables of $\mc X(GU(2,2),\Phi(2,2))$ and $\mc X(GU(3,2),\Phi(3,2))$ are counted in Hanaki's classification of small association schemes with character tables listed in ``as09[10]" and ``as27[403]" in \cite{Ha}.

\subsection{Character table of $\mc X(GU(n,2),\Phi(n,2))$ for $n\geq 4$}

\begin{theorem}\label{vecchartab}
For $n\geq 4$, the character table of $\mc X(GU(n,2),\Phi(n,2))$ is
given by 


\begin{center}
$P \ = \ $\begin{tabular}{|rrrrrrr|r|}
\hline
$1$ & $1$ & $1$ & $2^{2n-3}$ & $2^{2n-3}$ & $2^{2n-3}$ & $2^{2n-3}$ $- (-2)^{n-1} - 4$ & $1$\\
$1$ & $\omega$ & $\overline{\omega}$ & $-(-2)^{n-1}$ & $-(-2)^{n-1}\overline{\omega}$ & $-(-2)^{n-1}{\omega}$ & $0$ & $m_1$\\
$1$ & $\overline{\omega}$ & $\omega$ & $-(-2)^{n-1}$ & $-(-2)^{n-1}{\omega}$ & $-(-2)^{n-1}\overline{\omega}$ & $0$ & $m_2$\\
$1$ & $1$ & $1$ & $-(-2)^{n-2}$ & $-(-2)^{n-2}$ & $-(-2)^{n-2}$ & $3(-2)^{n-2} - 3$ & $m_3$\\
$1$ & $\omega$ & $\overline{\omega}$ & $-(-2)^{n-2}$ & $-(-2)^{n-2}\overline{\omega}$ & $-(-2)^{n-2}{\omega}$ & $0$ & $m_4$\\
$1$ & $\overline{\omega}$ & $\omega$ & $-(-2)^{n-2}$ & $-(-2)^{n-2}{\omega}$ & $-(-2)^{n-2}\overline{\omega}$ & $0$ & $m_5$\\
$1$ & $1$ & $1$ & $-(-2)^{n-3}$ & $-(-2)^{n-3}$ & $-(-2)^{n-3}$ & $3(-2)^{n-3} - 3$ & $m_6$\\
\hline
\end{tabular}
\end{center}
where $\omega$ is the primitive third root of unity.
\end{theorem}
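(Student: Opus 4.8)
The plan is to diagonalise the $7$-dimensional Bose--Mesner algebra $\mc A=\langle A_0,\dots,A_6\rangle$ of $\mc X=\mc X(GU(n,2),\Phi(n,2))$ — equivalently the isomorphic intersection algebra $\langle B_0,\dots,B_6\rangle$ whose generators are written out in the proof of Theorem~\ref{veccom} — by splitting it along the thin closed subset $\{R_0,R_1,R_2\}$. By Theorem~\ref{veccom} the scheme is commutative, so $\mc A$ is semisimple with primitive idempotents $E_0,\dots,E_6$, and each row of the sought first eigenmatrix $P$ records the simultaneous eigenvalues of $A_0,\dots,A_6$ on one common eigenspace (the row of $E_0=\tfrac1{|\Phi|}J$ being the valencies). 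Since $q^2-1=3$ and $\alpha^3=1$ in $\F_4$, the valency-one relation $R_1=S_\alpha$ satisfies $A_1^3=I$ and $A_2=A_1^2=A_1^\top$; hence $A_1$ has eigenvalues in $\{1,\omega,\overline\omega\}$, and the orthogonal idempotents $F_\zeta=\tfrac13(I+\overline\zeta A_1+\zeta A_1^2)$, $\zeta^3=1$, give a decomposition $\mc A=F_1\mc A\oplus F_\omega\mc A\oplus F_{\overline\omega}\mc A$ with $A_1F_\zeta=\zeta F_\zeta$. It then suffices to diagonalise $\mc A$ on each summand, and, since the $A_i$ are real, $F_{\overline\omega}\mc A$ and its diagonalisation are the complex conjugates of those of $F_\omega\mc A$.

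On $F_\zeta\mc A$ with $\zeta\neq1$, I would first read off from Table~\ref{inttable} the products $A_6A_1=A_6A_2=A_6$, $A_4A_1=A_3$, $A_5A_1=A_4$, which give $A_6F_\zeta=\tfrac13A_6(1+\overline\zeta+\zeta)=0$, $A_4F_\zeta=\overline\zeta A_3F_\zeta$, $A_5F_\zeta=\zeta A_3F_\zeta$; together with $A_0F_\zeta=F_\zeta$, $A_1F_\zeta=\zeta F_\zeta$, $A_2F_\zeta=\overline\zeta F_\zeta$ this shows $F_\zeta\mc A$ is spanned by $F_\zeta$ and $A_3F_\zeta$, so it is two-dimensional and everything on it is governed by the single operator $A_3$. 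Expanding $A_3^2=\sum_h p^h_{33}A_h$ on the basis $\{F_\zeta,A_3F_\zeta\}$ and inserting the values $p^0_{33}=2^{2n-3}$, $p^1_{33}=p^2_{33}=0$, $p^3_{33}=|\Phi(n-2,2)|+1$, $p^4_{33}=p^5_{33}=2^{2n-5}+(-2)^{n-3}$, $p^6_{33}=2^{2n-5}$ from Table~\ref{inttable}, the characteristic polynomial of $A_3$ on $F_\zeta\mc A$ should simplify, using $|\Phi(n-2,2)|=2^{2n-5}-(-2)^{n-3}-1$, to $t^2-(-2)^{n-2}t+(-2)^{2n-3}=(t+(-2)^{n-1})(t+(-2)^{n-2})$. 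Substituting the two roots back through $A_4=\overline\zeta A_3$, $A_5=\zeta A_3$, $A_6=0$ then yields exactly rows $1$ and $4$ of $P$ when $\zeta=\omega$, and their conjugates, rows $2$ and $5$, when $\zeta=\overline\omega$.

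On $F_1\mc A$ I would proceed the same way: now $A_1F_1=A_2F_1=F_1$, and $A_4A_1=A_3$, $A_5A_1=A_4$ give $A_4F_1=A_5F_1=A_3F_1=\tfrac13(A_3+A_4+A_5)$, while $A_6F_1=A_6$, so $F_1\mc A$ is spanned by $F_1$, $A_3F_1$, $A_6$ and is three-dimensional. Expanding $A_3^2$ and $A_3A_6$ on this basis — the latter using $p^h_{36}=|\Phi(n-2,2)|$ for $h\in I_2$ and $p^D_{36}=2^{2n-5}$ — produces a $3\times3$ matrix whose trace, determinant, and middle elementary symmetric function I would check against $(t-2^{2n-3})(t+(-2)^{n-2})(t+(-2)^{n-3})$; the root $2^{2n-3}=k_3$ belongs to $E_0$, and the other two roots, combined with $A_4=A_5=A_3$ and the identity $\sum_j p_j(i)=0$ for $i\neq0$ (from $JE_i=0$, which pins down the last column), give rows $3$ and $6$. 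Assembling the seven rows yields the asserted $P$; the multiplicities $m_1,\dots,m_6$ then follow from the first formula in Proposition~\ref{formulae}, and the whole table can be cross-checked against the orthogonality relations of Proposition~\ref{eigen} and against Table~\ref{inttable} via $p^h_{ij}=\tfrac1{|\Phi|k_h}\sum_l p_i(l)p_j(l)\overline{p_h(l)}m_l$. The cases $n=2,3$ are already handled in Subsection~\ref{n2n3}.

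The mathematical reduction here is short; the main obstacle will be bookkeeping. One has to extract the right dozen products $A_iA_j$ and the intersection numbers $p^h_{33},p^h_{36}$ from Table~\ref{inttable} in the correct congruence cases (the $I_2$--$I_2$--$I_2$ entries are modulo $q+1=3$ with the shift $t$, the others modulo $q^2-1=3$), and then carry out the cancellations among the closed-form expressions in $(-2)^{\bullet}$ and $|\Phi(n-2,2)|$ needed to recognise the claimed factorisations of the characteristic polynomials. One should also note that for $n\ge4$ no degeneracy occurs — in particular $|\Phi(n-2,2)|>0$, so $\dim F_\zeta\mc A$ is as claimed and the listed eigenvalues of $A_3$ are pairwise distinct — even though $|\Phi(n-4,2)|$ may vanish.
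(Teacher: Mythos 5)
Your proposal is correct, and it reaches the table by a genuinely different route than the paper. The paper's proof is essentially computational: it takes the seven intersection matrices $B_0,\dots,B_6$ written out in the proof of Theorem~\ref{veccom}, computes their eigenvalues symbolically in $n$ with MATLAB, arranges the eigenvalues into rows so that the orthogonality relations of Proposition~\ref{eigen} hold, and then cross-checks against the intersection numbers. You instead exploit the thin closed subset $\{R_0,R_1,R_2\}$: since $A_1$ is the order-$3$ permutation matrix of $x\mapsto\alpha x$ with $A_1^2=A_2=A_1^{\top}$, the idempotents $F_\zeta=\tfrac13(I+\bar\zeta A_1+\zeta A_1^2)$ split the Bose--Mesner algebra into ideals on which everything is generated by $A_3$. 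I verified the ingredients you quote: $A_4A_1=A_3$, $A_5A_1=A_4$, $A_6A_1=A_6A_2=A_6$; the values $p^0_{33}=2^{2n-3}$, $p^3_{33}=|\Phi(n-2,2)|+1$, $p^4_{33}=p^5_{33}=2^{2n-5}+(-2)^{n-3}$, $p^6_{33}=2^{2n-5}$, $p^h_{36}=|\Phi(n-2,2)|$ for $h\in I_2$, $p^D_{36}=2^{2n-5}$; the factorization $t^2-(-2)^{n-2}t+(-2)^{2n-3}=\bigl(t+(-2)^{n-1}\bigr)\bigl(t+(-2)^{n-2}\bigr)$ on $F_\zeta\mc{A}$, $\zeta\neq1$; and that the trace, second elementary symmetric function and determinant of your $3\times3$ matrix on $F_1\mc{A}$ agree with $\bigl(t-2^{2n-3}\bigr)\bigl(t+(-2)^{n-2}\bigr)\bigl(t+(-2)^{n-3}\bigr)$, so all seven rows come out as claimed. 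Your route buys a computer-free, self-contained derivation and removes the row-assembly step the paper handles via orthogonality, since within each ideal a row is completely determined by its $p_3$-value together with $p_4=\bar\zeta p_3$, $p_5=\zeta p_3$, $p_6=0$ (resp.\ $p_4=p_5=p_3$, $p_6=-3-3p_3$ on $F_1\mc{A}$). One tightening: to guarantee that both roots of the quadratic and all three roots of the cubic actually occur as rows, argue by dimension rather than by $|\Phi(n-2,2)|>0$: the $A_i$ are linearly independent, so $\dim\mc{A}=7$, while your spanning sets bound the three ideals by $3+2+2$, forcing their dimensions to be exactly $3,2,2$; distinctness of the eigenvalues is then not even needed.
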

\begin{proof}
First we find the intersection matrices of $\mc X(GU(n,2),\Omega(n,2))$ for arbitrary $n$, as shown in the proof of Theorem \ref{veccom}. We use the help of MATLAB to compute the eigenvalues of each in terms of $n$.  
The eigenvalues are arranged in such a way that they satisfy the orthogonality conditions of Proposition \ref{eigen}.
It is routine to also verify the following formulae for each $h,i,j\in[D]$ using these intersection numbers and the proposed character table as another verification
$$p_{ij}^h = \frac1{|\Phi|\cdot k_h} \sum\limits_{l=0}^D p_i(l) p_j(l) \ol{p_h(l)} m_{l} \qquad\qquad p_i(h)p_j(h) = \sum_{l=0}^D p_{ij}^l p_l(h).$$
\end{proof}

From this we can derive the multiplicities as well.

\begin{corollary}
For $n\geq 4$, the multiplicities of the character table of $\mc X(GU(n,2),\Phi(n,2))$	are
$$m_0 = 1$$
$$m_1 = m_2  = (2^n-(-1)^n)(2^{n-1}-(-1)^{n-1})/9$$
$$m_3 = 4(2^n-(-1)^n)(2^{n-3}-(-1)^{n-3})/9$$
$$m_4 = m_5 = 2(2^n-(-1)^n)(2^{n-1}-(-1)^{n-1})/9$$
$$m_6 =8(2^{n-1}-(-1)^{n-1})(2^{n-2}-(-1)^{n-2})/9$$
\end{corollary}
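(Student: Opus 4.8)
The plan is to extract the multiplicities directly from the character table in Theorem~\ref{vecchartab} together with the orthogonality relations of Proposition~\ref{eigen}, rather than re-deriving anything from scratch. Since the first eigenmatrix $P$ has already been established (including the numerical entries of the rows and the first column of multiplicities $m_0 = 1$), the only remaining task is to produce closed-form expressions for $m_1, \dots, m_6$ in terms of $n$. The cleanest route is to use the first orthogonality relation
\[
\sum_{h=0}^{D} \frac{1}{k_h}\, p_h(i)\,\overline{p_h(i)} = \frac{|\Phi(n,2)|}{m_i}, \qquad i\in[D],
\]
which gives $m_i$ as $|\Phi(n,2)|$ divided by a sum of squared moduli of known quantities.

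First I would record the valencies $k_0 = \cdots = k_5 = 1$ (with $k_0,\dots,k_4$ the all-ones part and the remaining ones the $2^{2n-3}$-valency entries, suitably re-indexed as in Section~3 with $q=2$), $k_{D} = 2^{2}|\Phi(n-2,2)|$, etc., from Theorem~\ref{val4} specialized to $q=2$; here $|\Phi(m,2)| = (2^m-(-1)^m)(2^{m-1}-(-1)^{m-1})$. Then for each $i$ I would plug the $i$th row of $P$ into the displayed formula. Because every row of $P$ consists of a handful of values of the shape $c$, $c\omega$, $c\overline{\omega}$ (so that $|c\omega|^2 = |c\overline{\omega}|^2 = |c|^2$) together with a real entry in the last column, each sum $\sum_h |p_h(i)|^2/k_h$ collapses to a short combination of powers of $2$; dividing $|\Phi(n,2)|$ by the result and simplifying yields the stated formulae. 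As a sanity check I would verify $\sum_{i=0}^{D} m_i = |\Phi(n,2)|$ (which matches the identity already displayed in the $n\in\{2,3\}$ subsection) and that $m_i = m_h \cdot (k_i$-weighted count$)$ consistency with $\mathrm{tr}(E_i) = m_i$.

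An equally valid alternative — and the one I would actually present, since it avoids even the bookkeeping of the orthogonality sum — is simply to observe that the multiplicities are determined by $P$ via $m_i = k_i \big/ \big(\tfrac{1}{n}\sum_j |q_i(j)|^2/m_j\big)$ or, more directly, via the well-known fact $m_i = \mathrm{rank}(E_i)$ and the relation $n E_i = \sum_j \overline{p_i(j)} k_j^{-1} \cdot(\text{something})$; in practice the quickest honest statement is that the $m_i$ are read off as the solution of the linear system imposed by $PQ = nI$ (equivalently, the second orthogonality relation $\sum_i m_i p_{j_1}(i)\overline{p_{j_2}(i)} = n k_{j_1}\delta_{j_1 j_2}$) once $P$ is known, and then one checks the claimed closed forms satisfy it. I expect no genuine obstacle here: the only thing to be careful about is matching the index conventions between the re-labeled relations $R_l$ of Section~3 and the rows/columns of the character table in Theorem~\ref{vecchartab}, and handling the parity-dependent signs $(-1)^n$, $(-2)^{n-k}$ uniformly so that a single formula covers both $n$ even and $n$ odd. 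The ``hard part,'' such as it is, is purely organizational bookkeeping of which $k_h$ pairs with which row entry; everything else is a one-line division and algebraic simplification.

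\begin{proof}
By Proposition~\ref{eigen}, the multiplicities are recovered from the character table $P$ of Theorem~\ref{vecchartab} through the first orthogonality relation
\[
m_i \;=\; |\Phi(n,2)| \cdot \Big( \sum_{h=0}^{D} \frac{|p_h(i)|^2}{k_h} \Big)^{-1}, \qquad i\in[D],
\]
where $|\Phi(n,2)| = (2^n-(-1)^n)(2^{n-1}-(-1)^{n-1})$, the valencies are $k_0 = \cdots = k_5 = 1$, $k_3$-through-$k_5$ equal to $2^{2n-3}$ in the re-indexed ordering, and $k_D = 4\,|\Phi(n-2,2)|$ by Theorem~\ref{val4} with $q=2$. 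For each $i$ the $i$th row of $P$ consists of entries of the form $c,\ c\omega,\ c\overline{\omega}$ with $|c\omega| = |c\overline{\omega}| = |c|$, together with the single real last-column entry; hence each sum $\sum_h |p_h(i)|^2/k_h$ reduces to a short combination of powers of $2$. Carrying out the division and simplifying (uniformly in the parity of $n$, using $(-2)^{n-k} = (-1)^{n-k}2^{n-k}$) gives
\[
m_0 = 1, \quad m_1 = m_2 = \tfrac{1}{9}(2^n-(-1)^n)(2^{n-1}-(-1)^{n-1}),
\]
\[
m_3 = \tfrac{4}{9}(2^n-(-1)^n)(2^{n-3}-(-1)^{n-3}), \quad m_4 = m_5 = \tfrac{2}{9}(2^n-(-1)^n)(2^{n-1}-(-1)^{n-1}),
\]
\[
m_6 = \tfrac{8}{9}(2^{n-1}-(-1)^{n-1})(2^{n-2}-(-1)^{n-2}).
\]
Finally one checks $\sum_{i=0}^{D} m_i = |\Phi(n,2)|$, confirming consistency with $\sum_i \mathrm{tr}(E_i) = |\Phi(n,2)|$, and that these values satisfy the second orthogonality relation of Proposition~\ref{eigen} together with the given $P$.
\end{proof}
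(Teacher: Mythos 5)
Your proposal is correct and follows essentially the same route as the paper, which likewise derives the multiplicities by applying the first orthogonality relation of Proposition~\ref{eigen} to the character table of Theorem~\ref{vecchartab} with the valencies from Theorem~\ref{val4}. The only blemish is the self-contradictory statement of the valencies (``$k_0=\cdots=k_5=1$'' followed immediately by ``$k_3$ through $k_5$ equal $2^{2n-3}$''), which should read $k_0=k_1=k_2=1$, $k_3=k_4=k_5=2^{2n-3}$, $k_6=4|\Phi(n-2,2)|$; with that fixed the computation goes through exactly as you describe.
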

\begin{proof} 
These multiplicities are all computed again using the formula
$$\sum_{h=0}^D \frac1{k_h}p_h(i)\overline{p_h(j)} = \frac{|\Phi(n, 2)|}{m_j}\delta_{ij}.$$
\end{proof}

This finishes our investigation of the association scheme $\mc{X}(GU(n, q),\Phi(n,q))$ for all values of $n$ and $q$, since all intersection numbers were found, as well as all character tables that exist. 
\vs



\subsection{Fusion schemes of $\mc{X}(GU(n,2), \Phi(n,2))$}
In the classification of association schemes, often knowledge of the character table of its fusion or fission scheme is useful. A fusion scheme of a Schurian association scheme is not necessarily Schurian; however, we observe that $\mc{X}(GU(n,2), \Phi(n,2))$ has two fusion schemes, both of which are Schurian. One is its symmetrization and the other is a 2-class scheme if $n=2$ or $3$, and a 3-class scheme if $n\ge 4$. Note that each of these fusion schemes can be obtained by taking the semidirect product of $GU(n,q)$ and a suitable cyclic group and acting on the set of isotropic vectors, $\Phi(n, q)$.  

Before we discuss the character tables of interest, we first recall some basic facts about the fusion and fission of a commutative schemes  (cf. \cite{Bann, BS1, Hig, JS, FKM, KP, Muz}).

\begin{definition} Let $\mc{X} = \big (X,\{R_i\}_{0\le i\le d}\big )$ and $\wt{\mc{X}} = \big (X,\{\wt{R}_{\alpha}\}_{0\le \alpha \le e}\big )$ be commutative association schemes defined on $X$.  If for every $i\in [d]$, $R_i\subseteq \wt R_\alpha$ for some $\alpha \in [e]$, then we say that $\wt{\mc{X}}$ is a {\it fusion} scheme of $\mc{X}$, and $\mc{X}$ is a {\it fission} scheme of $\wt{\mc{X}}$.  For the notation, we will denote all the symbols belonging to $\wt{\mc{X}}$ by a \ $\widehat{}$ \ placed over the symbols (such as, $\wt{p}_{\alpha\beta}^{\gamma}$,\ $\wt{m}_i$,\ $\wt{k}_i$,\ $\wt{P}$,\ $\wt{p}_j(i)$, etc.) whenever we need to distinguish them from those belonging to $\mc{X}$.
\end{definition}

The following two criteria for fusion will be used repeatedly in this section.
\begin{proposition} For a given scheme $\mc{X} = \big (X,\{R_i\}_{0\le i\le d}\big )$ and a partition $\Lambda = \{\Lambda_\alpha\}_{0\le \alpha \le e}$ of $[d]$ with $\Lambda_0 = \{0\}$,\ $\wt{\mc{X}} = \big (X,\{\wt R_\alpha\}_{0\le \alpha\le e}\big )$ becomes a scheme with the relations defined by $\wt R_\alpha = \bigcup\limits_{i\in \Lambda_\alpha} R_i$, for $\alpha \in [e]$, if and only if
\begin{itemize}
\item[$(i)$]  $\wt R'_\alpha \ \ = \ \ \bigcup\limits_{i\in\Lambda_{\alpha}} R'_i \ \ = \ \ \bigcup\limits_{j\in
\Lambda_{\alpha'}}R_j \ \ = \ \ \wt{R}_{\alpha'}$ \ \ for some $\alpha' \in [e]$, and
\item[$(ii)$]  for any $\alpha,\beta,\gamma\in [e]$, and any $h,k\in \Lambda_\gamma$,
\[\sum\limits_{i\in\Lambda_\alpha}\ \sum\limits_{j\in\Lambda_\beta} p_{ij}^h =\sum\limits_{i\in\Lambda_\alpha}\
\sum\limits_{j\in\Lambda_\beta}\ p_{ij}^k \equiv \widehat{p}_{\alpha\beta}^\gamma.\]
\end{itemize}
\end{proposition}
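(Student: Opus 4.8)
The statement to prove is the standard fusion criterion: given a commutative scheme $\mc{X} = (X, \{R_i\}_{0\le i\le d})$ and a partition $\Lambda = \{\Lambda_\alpha\}_{0\le\alpha\le e}$ of $[d]$ with $\Lambda_0 = \{0\}$, the fused relations $\wt R_\alpha = \bigcup_{i\in\Lambda_\alpha}R_i$ define an association scheme if and only if conditions $(i)$ and $(ii)$ hold.

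\textbf{Proof proposal.}

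The plan is to verify the four defining axioms of an association scheme for $\wt{\mc{X}} = (X,\{\wt R_\alpha\}_{0\le\alpha\le e})$, identifying exactly where conditions $(i)$ and $(ii)$ are forced. First I would dispatch the easy axioms: $\wt R_0 = R_0$ is the identity relation because $\Lambda_0 = \{0\}$; the $\wt R_\alpha$ partition $X\times X$ since the $R_i$ do and the $\Lambda_\alpha$ partition $[d]$; condition $(i)$ is precisely axiom (3) (closure under transpose), since $\wt R_\alpha' = \bigcup_{i\in\Lambda_\alpha}R_i' = \bigcup_{i\in\Lambda_\alpha}R_{i'}$, and this equals some $\wt R_{\alpha'}$ exactly when the set $\{i' : i\in\Lambda_\alpha\}$ is itself a block $\Lambda_{\alpha'}$ of the partition — which is what $(i)$ asserts. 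So the content is axiom (4), the existence of intersection numbers, and it is here that $(ii)$ enters.

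Next I would carry out the key computation for axiom (4). Fix $\alpha,\beta,\gamma\in[e]$ and a pair $(x,y)\in\wt R_\gamma$; then $(x,y)\in R_h$ for a unique $h\in\Lambda_\gamma$. Count
\[
\big|\{z\in X : (x,z)\in\wt R_\alpha,\ (z,y)\in\wt R_\beta\}\big|
= \sum_{i\in\Lambda_\alpha}\sum_{j\in\Lambda_\beta}\big|\{z : (x,z)\in R_i,\ (z,y)\in R_j\}\big|
= \sum_{i\in\Lambda_\alpha}\sum_{j\in\Lambda_\beta}p_{ij}^h,
\]
where the first equality uses that the $R_i$ with $i\in\Lambda_\alpha$ are disjoint and similarly for $\beta$, and the second uses axiom (4) for $\mc{X}$ together with the fact that $p_{ij}^h$ depends only on $h$ (not on the particular pair in $R_h$). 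For $\wt{\mc{X}}$ to be a scheme, this count must be independent of which pair in $\wt R_\gamma$ we chose, i.e. independent of $h$ as $h$ ranges over $\Lambda_\gamma$; this independence is exactly condition $(ii)$, and when it holds the common value is by definition $\wt p_{\alpha\beta}^\gamma$. This establishes the equivalence: $(i)$ gives axiom (3), $(ii)$ gives axiom (4), and the converse directions follow by reading the same identities backwards. Finally I would note commutativity of $\wt{\mc{X}}$ is inherited: $\wt p_{\alpha\beta}^\gamma = \sum_{i\in\Lambda_\alpha}\sum_{j\in\Lambda_\beta}p_{ij}^h = \sum_{i\in\Lambda_\alpha}\sum_{j\in\Lambda_\beta}p_{ji}^h = \wt p_{\beta\alpha}^\gamma$ using commutativity of $\mc{X}$.

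There is essentially no hard obstacle here — the proof is a bookkeeping argument — but the one point requiring care is the logical structure of the ``if and only if'': one must check that $(ii)$ is not merely sufficient but necessary, which means arguing that if $\wt{\mc{X}}$ is a scheme then the displayed sum, being equal to the well-defined intersection number $\wt p_{\alpha\beta}^\gamma$, cannot depend on the choice of $h\in\Lambda_\gamma$. It is also worth being explicit that the $R_i$ are nonempty (so that $\wt R_\alpha$ are nonempty) and that the hypothesis that both $\mc{X}$ and $\wt{\mc{X}}$ are a priori assumed commutative in the Definition streamlines the last step. I would keep the write-up to the three axiom checks plus the inherited commutativity, since everything else is immediate.
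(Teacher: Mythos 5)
Your proof is correct and is the standard axiom-by-axiom verification one would give for this fusion criterion; note that the paper itself states this proposition without proof, treating it as a known fact from the fusion/fission literature, so there is no authorial argument to compare against. Your identification of condition $(i)$ with the transpose axiom and condition $(ii)$ with the well-definedness of $\widehat{p}_{\alpha\beta}^{\gamma}$ (including the necessity direction, which uses that each $R_h$ with $h\in\Lambda_\gamma$ is nonempty) is exactly the right content.
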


\begin{proposition}\ (\cite{Bann, JS, Muz}) \label{fusionchar}  Let $\mc{X}
= \big (X,\{R_i\}_{0\le i\le d}\big )$ be a scheme, and $\Lambda = \{\Lambda_\alpha\}_{0\le \alpha\le e}$ be
a partition of $[d]$ such that $\Lambda_0 = \{0\}$.  Suppose for every $\alpha\in[e]$,\
$\bigcup\limits_{i\in\Lambda_\alpha} R_{i'} = \bigcup\limits_{j\in \Lambda_{\alpha'}}R_j$ for some $\alpha'
\in [e]$.  Then $\Lambda$ gives rise to a fusion scheme $\wt{\mc{X}} = \big (X,\{\wt R_\alpha\}_{0\le \alpha \le
e}\big )$ with $\wt R_\alpha = \bigcup\limits_{i\in\Lambda_\alpha} R_i$ if and only if there exists a partition
$\Lambda^* = \{\Lambda_\alpha^*\}_{0\le\alpha\le e}$ of $[d]$ with $\Lambda_0^* = \{0\}$ such
that each $\big (\Lambda_\beta^*,\Lambda_\alpha\big )$-block of the character table $P$ of $\mc{X}$ has a constant row sum. In this case, the constant row sum $\sum\limits_{j\in \Lambda_\alpha} p_j(i)$ for
$i\in \Lambda_\beta^*$ of the block $(\Lambda_\beta^*,\Lambda_\alpha)$ is the $(\beta,\alpha)$-entry
$\widehat{p}_\alpha(\beta)$ of the fusion character table $\wt{P}$.
\end{proposition}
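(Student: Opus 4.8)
The plan is to translate the statement into linear algebra over the Bose--Mesner algebra $\mc{A} = \langle A_0,\dots,A_d\rangle$ of $\mc{X}$. For $\alpha\in[e]$ put $\wt{A}_\alpha = \sum_{i\in\Lambda_\alpha}A_i$ and $\wt{\mc{A}} = \langle\wt{A}_0,\dots,\wt{A}_e\rangle$. First I would record that the $\wt{A}_\alpha$ are $\{0,1\}$-matrices with pairwise disjoint nonzero supports whose union is all of $X\times X$, that $\wt{A}_0 = A_0 = I$ (using $\Lambda_0 = \{0\}$), and that $\sum_\alpha\wt{A}_\alpha = J$; moreover the hypothesis $\bigcup_{i\in\Lambda_\alpha}R_{i'} = \bigcup_{j\in\Lambda_{\alpha'}}R_j$ is exactly the statement that $\{\wt{A}_\alpha\}$ is closed under transposition. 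With these in hand, $\Lambda$ yields a fusion scheme $\wt{\mc{X}}$ with relations $\wt{R}_\alpha = \bigcup_{i\in\Lambda_\alpha}R_i$ \emph{if and only if} $\wt{\mc{A}}$ is closed under matrix multiplication, hence a subalgebra of $\mc{A}$: the only scheme axiom still to check is (4), which says each product $\wt{A}_\alpha\wt{A}_\beta$ lies in the span of the $\wt{A}_\gamma$, and once that holds the resulting structure constants are automatically nonnegative integers, being entries of the nonnegative integral matrix $\wt{A}_\alpha\wt{A}_\beta$, while commutativity of $\wt{\mc{A}}$ is inherited from $\mc{A}$. Since the $\wt{A}_\alpha$ are linearly independent, $\dim\wt{\mc{A}} = e+1$, and the task reduces to proving: $\wt{\mc{A}}$ is a subalgebra iff there is an $(e+1)$-block partition $\Lambda^*$ of $[d]$ making every $(\Lambda^*_\beta,\Lambda_\alpha)$-block of $P$ have constant row sum.

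The key device is the algebra isomorphism $\mc{A}\xrightarrow{\sim}\mbb{C}^{d+1}$, $M\mapsto(\theta_0(M),\dots,\theta_d(M))$, where $\theta_i(M)$ is the eigenvalue of $M$ on the primitive idempotent $E_i$ (so $M = \sum_i\theta_i(M)E_i$); under it $A_j$ maps to the $j$-th column $(p_j(0),\dots,p_j(d))$ of $P$, hence $\wt{A}_\alpha$ maps to the tuple whose $i$-th coordinate is $\sum_{j\in\Lambda_\alpha}p_j(i)$, i.e. the sum of the entries of row $i$ of $P$ over the columns in $\Lambda_\alpha$. Next I would invoke (and, if needed, prove) the classification of unital subalgebras of $\mbb{C}^{d+1}$: these are exactly the subspaces $\mc{S}_\Pi = \{v : v_i = v_{i'}\text{ whenever }i,i'\text{ share a block of }\Pi\}$, for $\Pi$ a partition of $[d]$, with $\dim\mc{S}_\Pi$ the number of blocks of $\Pi$ and with primitive idempotents the block-indicator vectors. (Proof sketch: for a unital subalgebra $\mc{S}$ let $\Pi$ be the partition into classes of the relation $i\sim i'\Leftrightarrow v_i = v_{i'}$ for all $v\in\mc{S}$; then $\mc{S}\subseteq\mc{S}_\Pi$, and picking $v\in\mc{S}$ that separates all blocks --- possible since over $\mbb{C}$ a vector space is not a finite union of proper subspaces --- Lagrange interpolation applied to the level sets of $v$ recovers the block indicators, forcing $\mc{S} = \mc{S}_\Pi$ by comparing dimensions.)

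Putting these together, I would argue both directions. If $\wt{\mc{A}}$ is a subalgebra, transport it to $\mbb{C}^{d+1}$ and let $\Pi$ be the partition with image $\mc{S}_\Pi$; since $J = \sum_\alpha\wt{A}_\alpha\in\wt{\mc{A}}$ we get $E_0 = \tfrac{1}{|X|}J\in\wt{\mc{A}}$, whose image $(1,0,\dots,0)$ can be constant on blocks only if $\{0\}$ is itself a block, so we may relabel $\Pi = \Lambda^*$ with $\Lambda^*_0 = \{0\}$; it has $\dim\wt{\mc{A}} = e+1$ blocks, and the membership $\wt{A}_\alpha\in\wt{\mc{A}}$ says exactly that $i\mapsto\sum_{j\in\Lambda_\alpha}p_j(i)$ is constant on each $\Lambda^*_\beta$. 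Conversely, from such a $\Lambda^*$ (with $e+1$ blocks by its indexing) the same computation gives $\wt{A}_\alpha\in\mc{S}_{\Lambda^*}$ for all $\alpha$, whence $\wt{\mc{A}}\subseteq\mc{S}_{\Lambda^*}$, and equality of dimensions ($e+1$ on both sides) forces $\wt{\mc{A}} = \mc{S}_{\Lambda^*}$, a subalgebra. Finally, for the eigenmatrix formula I would note that the primitive idempotents of $\wt{\mc{A}} = \mc{S}_{\Lambda^*}$ are $\wt{E}_\beta = \sum_{i\in\Lambda^*_\beta}E_i$, so writing $\wt{A}_\alpha = \sum_\beta\wt{p}_\alpha(\beta)\wt{E}_\beta$ and applying $\theta_i$ for any $i\in\Lambda^*_\beta$ yields $\wt{p}_\alpha(\beta) = \theta_i(\wt{A}_\alpha) = \sum_{j\in\Lambda_\alpha}p_j(i)$, which is the claimed constant row sum of the $(\Lambda^*_\beta,\Lambda_\alpha)$-block.

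The hard part, such as it is, is not conceptual but a matter of bookkeeping: that one needs $\wt{\mc{A}} = \mc{S}_{\Lambda^*}$ rather than mere containment (this is what pins $\Lambda^*$ down to exactly $e+1$ blocks, with $\{0\}$ forced to be among them), and that the fusion conclusion truly requires only closure of $\wt{\mc{A}}$ under multiplication --- the transpose hypothesis supplying axiom (3) and nonnegativity of the new intersection numbers coming for free. Everything else --- semisimplicity of $\mc{A}$ (already noted in the introduction) and the subalgebra classification --- is standard.
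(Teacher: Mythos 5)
Your argument is correct, and since the paper states Proposition \ref{fusionchar} as a known result quoted from \cite{Bann, JS, Muz} without giving a proof, the only comparison is to those sources: your route is essentially the standard one, namely that fusions of a commutative scheme correspond to unital subalgebras of the Bose--Mesner algebra $\mc{A}\cong\mbb{C}^{d+1}$, that such subalgebras are exactly the span of sums of primitive idempotents $E_i$ over the blocks of a partition $\Lambda^*$ of $[d]$, and that membership of the fused adjacency matrices in such a subalgebra is precisely the constant-row-sum condition on the blocks of $P$, with the dimension count $e+1$ pinning down equality rather than mere containment. The one implicit hypothesis you rely on, commutativity of $\mc{X}$ (needed for $\mc{A}\cong\mbb{C}^{d+1}$), is indeed assumed in the paper's surrounding discussion, so no gap results.
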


That is to say, character tables of fusion schemes are simply the sum of the characters of the relations that are fused together, removing the duplicate rows that are created.

We note that the fusion scheme obtained by fusing each non-symmetric relation of a scheme $\mc{X}$ with its symmetric conjugate is called the \emph{symmetrization} of $\mc{X}$. It is clear that every non-symmetric commutative association scheme has at least one symmetric fusion scheme, namely, its symmetrization.

\begin{corollary}
    Each of $\mc X(GU(2,2),\Phi(2,2))$ and $\mc X(GU(3,2),\Phi(3,2))$ has its symmetrization. The character tables, denoted by $\bar{P}$, of their symmetrizations are given by 
    \[\bar{P}(GU(2,2),\Phi(2,2)) = \begin{array}{|cccc|r|}\hline
1 & 2 & 2 & 4 & 1 \\
1 & -1 & 2 & -2 & 2\\
1 & 2 &  -1 & -2 & 2\\
1 & -1 & -1  & 1 &  4\\ \hline
\end{array}; \qquad 
\bar{P}(GU(3,2),\Phi(3,2)) = \begin{array}{|cccc|r|}\hline
1 & 2 & 8 & 16 & 1 \\
1 & -1 & -4 & 4 & 6\\
1 & 2 &  -1 & -2 & 8\\
1 & -1 & 2  & -2 &  12\\ \hline
\end{array}.\]
    
    Each of $\mc X(GU(2,2),\Phi(2,2))$ and $\mc X(GU(3,2),\Phi(3,2))$ also has a 2-class fusion scheme whose relations $\wt R_1$ and $\wt R_2$ are obtained as $\wt R_1=R_1\cup R_2$ and $\wt R_2 =R_3\cup R_4\cup R_5$. The character tables for these fusion schemes, all of which are shown to be Schurian association schemes, are given by

\[\wt P = \begin{array}{|ccc|r|}\hline
1 & 2 & 6 & 1 \\
1 & 2 &  -3 & 2\\
1 & -1 & 0 &  6\\ \hline
\end{array};
 \qquad
\wt P =\begin{array}{|ccc|r|}\hline
1 & 2 & 24 & 1 \\
1 & 2 &  -3 & 8\\
1 & -1 & 0 &  18\\ \hline
\end{array}.\]

\end{corollary}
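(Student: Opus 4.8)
The plan is to apply the two fusion criteria recalled immediately above (Proposition on combinatorial fusion and Proposition \ref{fusionchar}) to the explicit character tables of $\mc X(GU(2,2),\Phi(2,2))$ and $\mc X(GU(3,2),\Phi(3,2))$ displayed in Section \ref{n2n3}, and then to account for the Schurian realization by exhibiting an appropriate overgroup. For the symmetrization, I would first invoke Lemma \ref{primes} (or rather, note directly from Theorem \ref{orb}) that the symmetric conjugate of each $S_{\alpha^i}$ is $S_{\alpha^{-i}}$ and of each $R_{\alpha^j}$ is $R_{\alpha^{jq}}$, so the partition $\Lambda$ that groups $\{i,i'\}$ together is well-defined; since $q=2$ the relations $R_1=S_\alpha, R_2=S_{\alpha^2}$ are mutually conjugate and $R_4=R_\alpha, R_5=R_{\alpha^2}$ are mutually conjugate while $R_3=R_{\alpha^{0}}$... actually one must be careful with indices, but the upshot is the pairing $\{R_1,R_2\}$, $\{R_4,R_5\}$, with $R_3$ (and $R_6$ when $n\ge4$) self-paired. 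Then I would check criterion (i) of the combinatorial fusion proposition (automatic, since we fuse conjugate pairs) and verify the constant-row-sum condition of Proposition \ref{fusionchar} by taking $\Lambda^*=\Lambda$ and reading off the blocks of the tables in Tables for $n=2,3$: e.g. the block of columns $\{1,2\}$ against rows $\{1,\overline\omega\text{-row}\}$ has row sums $\omega+\overline\omega=-1$ and $\overline\omega+\omega=-1$, constant, and similarly for the $2\overline\omega,2\omega$ columns; this produces exactly the entries $-1, -4, 4$, etc., claimed in $\bar P$. The multiplicities of the fusion scheme are obtained by summing the multiplicities over each $\Lambda^*_\beta$, giving $1, 2+2=... $ wait — giving $1, 2, 2, 4$ for $n=2$ and $1, 6, 8, 12$ for $n=3$ as listed.

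For the 2-class fusion scheme, I would use the coarser partition $\widetilde\Lambda_0=\{0\}$, $\widetilde\Lambda_1=\{1,2\}$, $\widetilde\Lambda_2=\{3,4,5\}$ and again verify the hypotheses of Proposition \ref{fusionchar}, this time needing a possibly different dual partition $\Lambda^*$: the candidate is $\Lambda^*_0=\{0\}$, $\Lambda^*_1=\{\text{rows }1,2\}$... no — one needs the row-partition for which every $(\Lambda^*_\beta,\widetilde\Lambda_\alpha)$-block has constant row sum, and here that partition merges the two $\omega$-conjugate rows with each other and leaves the "valency-type" rows. Reading Table for $GU(2,2)$: the block of columns $\{1,2\}$ (i.e. the columns headed $1,\omega,\overline\omega$, indices $1,2$) against the trivial row and against the row starting $1,\omega,\overline\omega$: row sums are $1+\omega+\overline\omega=$ hmm that is wrong since column $0$ is excluded — columns $1,2$ give $\omega+\overline\omega=-1$ in one place and $1+1=2$ elsewhere; pairing rows so that each block is constant yields the $3\times3$ table $\widetilde P$ with entries $1,2,6$ / $1,2,-3$ / $1,-1,0$ and multiplicities $1,2,6$. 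The same computation with the $GU(3,2)$ table gives $1,2,24$ / $1,2,-3$ / $1,-1,0$ with multiplicities $1,8,18$. A clean way to organize all of this is simply to check the constant-row-sum condition on the already-known $P$, since once that holds, Proposition \ref{fusionchar} simultaneously certifies that $\widetilde{\mc X}$ is an association scheme and computes $\widetilde P$; the multiplicities then follow from $\widetilde m_\beta=\sum_{i\in\Lambda^*_\beta}m_i$.

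Finally, for the Schurian claim, I would argue that each fusion arises group-theoretically. The field $\mbb F_{q^2}^*$ with $q=2$ has order $3$; let $C_3=\langle\alpha\rangle$ act on $\Phi(n,2)$ by scalar multiplication. This action normalizes that of $GU(n,2)$ (scalars commute with unitary maps), so we may form $\widetilde G = GU(n,2)\rtimes C_3$ — concretely, $GU(n,2)$ together with all scalar maps, which is a subgroup of $GL(n,4)$. Then $\widetilde G$ acts transitively on $\Phi(n,2)$, and its orbitals are exactly the fusions of the $GU(n,2)$-orbitals under the induced $C_3$-action on orbitals: the $C_3$-action sends $S_{\alpha^i}\mapsto S_{\alpha^i}$ (scalars fix the "$y=\alpha^i x$" relation) but permutes the $R_{\alpha^j}$ cyclically ($\langle x,y\rangle=\alpha^j \mapsto \alpha^{j+ (1+q)}=\alpha^{j}$ — here one checks the exponent shift carefully using that a scalar $\alpha$ sends $\langle x,y\rangle$ to $\alpha^{1+q}\langle x,y\rangle=\alpha^{3}\langle x,y\rangle=\langle x,y\rangle$, so in fact scalars fix each $R_{\alpha^j}$ too, which is why the nontrivial fusion requires also fusing with $T$-type... no). The cleaner statement, and the one I would actually prove, is: the symmetrization is the orbital scheme of $GU(n,2)$ together with the map $x\mapsto \bar x$-twisted transpose-inverse (the full isometry-plus-semilinear group), and the 2-class fusion is the orbital scheme of $\Gamma U(n,2)=GU(n,2)\rtimes\mathrm{Gal}(\mbb F_4/\mbb F_2)$ acting on $\Phi(n,2)$; in each case transitivity is inherited and the orbitals visibly coincide with the stated fusions. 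I expect the main obstacle to be exactly this last point — getting the index bookkeeping for which relations fuse under each overgroup to line up precisely with the partitions $\Lambda$ forced by the constant-row-sum condition, i.e. reconciling the combinatorial fusion (read off the character table) with the group-theoretic one (read off the action on orbitals). The character-table verifications themselves are routine given Tables in \ref{n2n3}.
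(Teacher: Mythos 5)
Your core argument is exactly the paper's: the published proof is a one-line appeal to Proposition \ref{fusionchar} applied to the explicit tables of Section \ref{n2n3}, and your row-sum computations (columns $\{1,2\}$ and $\{4,5\}$ for the symmetrization, columns $\{1,2\}$ and $\{3,4,5\}$ for the 2-class fusion, with multiplicities obtained by summing over the dual blocks) are correct and reproduce all four displayed tables. So for the character-table content there is nothing to add.

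Where you go astray is the group-theoretic digression on the Schurian claim, and you should be aware that your two candidate overgroups do not do what you assign them to. As you half-notice mid-sentence, the scalar group acts on orbitals via $\langle \lambda x,\lambda y\rangle=\lambda^{q+1}\langle x,y\rangle=\lambda^{3}\langle x,y\rangle=\langle x,y\rangle$ for $q=2$, so adjoining scalars fuses nothing. The semilinear extension $\Gamma U(n,2)=GU(n,2)\rtimes\mathrm{Gal}(\mbb F_4/\mbb F_2)$ sends $S_{\alpha^i}\mapsto S_{\alpha^{2i}}$ and $R_{\alpha^j}\mapsto R_{\alpha^{2j}}$, which is precisely the pairing $i\mapsto i'$ of Lemma \ref{primes}; hence $\Gamma U(n,2)$ realizes the \emph{symmetrization}, not the 2-class fusion. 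The 2-class fusion additionally merges $R_{\alpha^0}$ with $R_{\alpha}\cup R_{\alpha^2}$ while keeping $S_{\alpha}\cup S_{\alpha^2}$ apart from the identity, and no subgroup of $\Gamma L(n,4)$ normalizing $GU(n,2)$ that you have exhibited does this (linear similitudes of a Hermitian form over $\mbb F_4/\mbb F_2$ have factor in $\mbb F_2^*=\{1\}$). The paper does not prove the Schurian claim inside the corollary either: it asserts the semidirect-product realization in the preamble and then certifies Schurian-ness by matching the four tables against Hanaki's catalogue (as09[5], as27[383], as09[2], as27[2]). If you want a self-contained proof of that clause, you need either that lookup or a correctly identified overgroup for the 2-class case; your proposal as written leaves that point open.
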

\begin{proof}
    It is clear from Proposition \ref{fusionchar} and the character tables from Section \ref{n2n3}.
\end{proof}

The character tables for these fusion schemes are listed in Hanaki's classification of small association schemes with character tables listed in ``as09[5], as27[383], as09[2]" and ``as27[2]" in \cite{Ha}, respectively.


\begin{corollary}
For $n\ge 4$, the character table of the symmetrization of $\mc{X}(GU(n, 2), \Phi(n,2))$ is given by
{\small
\[\bar P = \begin{array}{|ccccc|r|}\hline
1 & 2 & 2^{2n-3} & 2^{2n-2} & 2^{2n-3}-(-2)^{n-1}-4 & 1 \\
1 & 2 & -(-2)^{n-2} & (-2)^{n-1} & 3(-2)^{n-2}-3 & 4(2^n-(-1)^n)(2^{n-3}-(-1)^{n-3})/9 \\
1 & 2 &  -(-2)^{n-3} & (-2)^{n-2} & 3(-2)^{n-3}-3 & 8(2^{n-1}-(-1)^{n-1})(2^{n-2}-(-1)^{n-2})/9\\
1 & -1 & -(-2)^{n-1}  & (-2)^{n-1} & 0 &  (2^{2n}+(-2)^{n}-2)/9\\
1 & -1 & -(-2)^{n-2} & (-2)^{n-2} & 0 & (2^{2n+1}+2(-2)^{n}-4)/9 \\ \hline
\end{array}.\]
}

The character table of the 3-class fusion scheme whose relations $\wt R_1$, $\wt R_2$ and $\wt R_3$ are defined by $\wt R_1=R_1\cup R_2$, $\wt R_2 =R_3\cup R_4\cup R_5$ and $\wt R_3=R_6$ is given by
\smallskip

\begin{center}
$\wt P \ = \ $\begin{tabular}{|cccc|r|}
\hline
$1$ & $2$ & $3\cdot2^{2n-3}$ & $2^{2n-3}$ $- (-2)^{n-1} - 4$ & $1$\\
$1$ & $2$ & $-3(-2)^{n-2}$ & $3(-2)^{n-2} - 3$ & $4(2^n-(-1)^n)(2^{n-3}-(-1)^{n-3})/9$\\
$1$ & $2$ & $-3(-2)^{n-3}$ & $3(-2)^{n-3} - 3$ & $ 8(2^{n-1}-(-1)^{n-1})(2^{n-2}-(-1)^{n-2})/9$\\
$1$ & $-1$ & $0$ & $0$ & $(2^{2n}+(-2)^n-2)/3$\\
\hline
\end{tabular}.
\end{center}
\end{corollary}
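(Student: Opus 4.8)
The plan is to apply Proposition \ref{fusionchar} twice, once for each of the two fusion schemes under consideration, using the character table of $\mc{X}(GU(n,2),\Phi(n,2))$ from Theorem \ref{vecchartab} as input. For the symmetrization, I first need the explicit pairing $l\mapsto l'$ of relations with their symmetric conjugates. Examining the intersection matrices $B_1,\dots,B_6$ in the proof of Theorem \ref{veccom} (or equivalently the congruence conditions in Table~1), one reads off that $R_1$ and $R_2$ are a conjugate pair (the $S_{\alpha}$ and $S_{\alpha^2}$ relations), $R_4$ and $R_5$ are a conjugate pair (the $R_{\alpha}$-type relations with inner products $\alpha$ and $\alpha^2=\bar\alpha$), while $R_0,R_3,R_6$ are each self-paired. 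Thus the symmetrization partition of the index set is $\Lambda_0=\{0\}$, $\Lambda_1=\{1,2\}$, $\Lambda_2=\{3\}$, $\Lambda_3=\{4,5\}$, $\Lambda_4=\{6\}$, which produces a $5$-class symmetric scheme consistent with the claimed $5\times 5$ table $\bar P$.

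Next I would verify the hypothesis of Proposition \ref{fusionchar}: that there is a dual partition $\Lambda^*$ of $[D]$ such that each $(\Lambda^*_\beta,\Lambda_\alpha)$-block of $P$ has constant row sum. Since $\omega+\bar\omega=-1$, the rows of $P$ fall into constant-row-sum groups exactly as $\{0,3,6\}$ (the "$1,1,1$" rows) versus $\{1,4\}$ and $\{2,5\}$ (the "$1,\omega,\bar\omega$" and "$1,\bar\omega,\omega$" rows); one checks that $\Lambda^*_0=\{0\}$, $\Lambda^*_1=\{3\}$, $\Lambda^*_2=\{6\}$, $\Lambda^*_3=\{1,2\}$, $\Lambda^*_4=\{4,5\}$ works. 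The new entries are then the constant row sums: for instance $\bar p_1(\beta)=\wt p_{\{1,2\}}(\beta)$ equals $p_1(i)+p_2(i)$ for $i\in\Lambda^*_\beta$, giving $2$ in the rows indexed by $\Lambda^*_0,\Lambda^*_1,\Lambda^*_2$ and $-1$ in those indexed by $\Lambda^*_3,\Lambda^*_4$ (using $\omega+\bar\omega=-1$); similarly the $\wt R_3$-column entries are $p_4(i)+p_5(i)$, which are $2\cdot(-(-2)^{n-2})=(-2)^{n-1}$, $2\cdot(-(-2)^{n-3})=(-2)^{n-2}$, etc.; and the $R_6$-column stays as is on the self-paired rows while becoming $0$ on the conjugate-pair rows (as those entries are already $0$). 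Assembling and removing the resulting duplicate rows yields $\bar P$; the multiplicities are $\bar m_\beta=\sum_{i\in\Lambda^*_\beta}m_i$, which reproduces the listed column, and a final consistency check is that the row sums of $\bar P$ vanish off the zeroth row.

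For the $3$-class fusion scheme I would repeat this with the coarser partition $\Lambda_0=\{0\}$, $\Lambda_1=\{1,2\}$, $\Lambda_2=\{3,4,5\}$, $\Lambda_3=\{6\}$. Here the needed dual partition is $\Lambda^*_0=\{0\}$, $\Lambda^*_1=\{6\}$, $\Lambda^*_2=\{3\}$ together with the remaining indices absorbed so that each $(\Lambda^*_\beta,\Lambda_\alpha)$-block has constant row sum; the $\wt R_2$-column entry is $p_3(i)+p_4(i)+p_5(i)$, which is $3\cdot(-(-2)^{n-2})=-3(-2)^{n-2}$ on the $1,1,1$-type rows $\{0,3,6\}$ and $-(-2)^{n-2}(1+\omega+\bar\omega)=0$ on the conjugate-pair rows, matching the table. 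One then checks $\wt R_1$-column sums ($2$ or $-1$) and the $\wt R_3=R_6$ column exactly as before, and reads off the multiplicities as the appropriate partial sums of the $m_i$.

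The routine but slightly delicate part is bookkeeping the conjugate-pair structure correctly — i.e., confirming that $\{1,2\}$, $\{4,5\}$ are indeed the conjugate pairs and $3,6$ are self-paired, which is where Lemma \ref{primes} and the form of the intersection matrices are used — together with making sure the dual partition $\Lambda^*$ I exhibit genuinely gives constant row sums in \emph{every} block, not merely the ones I spot-check. The main substantive obstacle, such as it is, is purely arithmetic: verifying that the constant row sums computed from Theorem \ref{vecchartab} simplify to the closed forms displayed (e.g.\ $3(-2)^{n-2}-3$ in the last column of the $1,1,1$-type rows, and the multiplicity formulas after combining the $(2^n-(-1)^n)$-style factors). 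Since Proposition \ref{fusionchar} is an "if and only if" whose sufficiency direction is exactly what we need, no further structural argument is required; the Schurian claim follows from the remark that each fusion is realized by the action of a semidirect product $GU(n,2)\rtimes C$ on $\Phi(n,2)$, with $C$ the subgroup of scalars $\langle\alpha\rangle$ (full $C_3$ for the symmetrization, trivial here since $q=2$ gives $|\mathbb{F}_{q^2}^*|=3$) — so in fact for $q=2$ the "symmetrization" coincides with the scheme itself and the only genuinely new object is the $3$-class fusion, obtained by fusing the scalar-multiple relations $\{1,2\}$ and the inner-product relations $\{3,4,5\}$ respectively, which is visibly $GU(n,2)$-and-scalar invariant hence Schurian.
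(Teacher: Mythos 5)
Your main derivation is correct and is exactly the route the paper intends (the proof is ``Omitted,'' but the parallel corollary for $n\in\{2,3\}$ cites Proposition \ref{fusionchar} applied to the character table): the conjugate pairs are indeed $\{1,2\}$ and $\{4,5\}$ with $0,3,6$ self-paired, the dual partitions you exhibit give constant row sums in every block because the only non-real entries occur in $\omega,\overline{\omega}$ pairs, and the resulting entries and multiplicities all check out (e.g.\ $m_1+m_2=2(2^n-(-1)^n)(2^{n-1}-(-1)^{n-1})/9=(2^{2n}+(-2)^n-2)/9$, and the bottom row of $\wt P$ has multiplicity $m_1+m_2+m_4+m_5=(2^{2n}+(-2)^n-2)/3$).

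One caveat: your closing aside is wrong and contradicts your own computation. $\mc X(GU(n,2),\Phi(n,2))$ is a non-symmetric $6$-class scheme (its table contains $\omega\neq\overline{\omega}$), so its symmetrization is the proper $4$-class fusion with table $\bar P$; it does not ``coincide with the scheme itself.'' Moreover, the scalar group $\langle\alpha I\rangle$ already lies inside $GU(n,2)$ when $q=2$ (since $\alpha\overline{\alpha}=\alpha^{3}=1$ in $\F_4$), so adjoining scalars realizes neither fusion; the symmetrization is realized by adjoining the field automorphism $x\mapsto \overline{x}$, which sends $S_{\alpha^i}\mapsto S_{\alpha^{2i}}$ and $R_{\alpha^j}\mapsto R_{\alpha^{2j}}$ and fixes $T$. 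Since the corollary as stated asserts only the two character tables, this misstep does not affect the validity of your proof of the statement, but it should be deleted or corrected.
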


\begin{proof} Omitted.
\end{proof}

\end{document}